\numberwithin{equation}{section}
\definecolor{revcolor}{RGB}{210, 230, 255} 
\newtheorem{theorem}{Theorem}[section]
\newtheorem{lemma}[theorem]{Lemma}
\begin{document}

	\title[Diophantine Equations involving associated Pell numbers and repdigits]{Some Diophantine Equations involving associated Pell numbers and repdigits} 

\author[M. Mohapatra]{M. Mohapatra}
\address{Monalisa Mohapatra, Department of Mathematics, National Institute of Technology Rourkela, Odisha-769 008, India}
\email{mmahapatra0212@gmail.com}

\author[P. K. Bhoi]{P. K.  Bhoi}
\address{Pritam Kumar Bhoi, Institute of Mathematics and Applications, Bhubaneswar, Odisha -751 029, India}
\email{pritam.bhoi@gmail.com}

\author[G. K. Panda]{G. K. Panda}
\address{Gopal Krishna Panda, Department of Mathematics, National Institute of Technology Rourkela, Odisha-769 008, India}
\email{gkpanda\_nit@rediffmail.com}

\thanks{2020 Mathematics Subject Classification: Primary 11B39, Secondary 11J86, 11D61. \\
	Keywords: associated Pell numbers, linear forms in logarithms, Baker-Davenport reduction method}

\begin{abstract}
	In this paper, we explore the relationship between repdigits and associated Pell numbers, specifically focusing on two main aspects: expressing repdigits as the difference of two associated Pell numbers, and identifying which associated Pell numbers can be represented as the difference of two repdigits. Additionally, we investigate all associated Pell numbers which are the concatenation of three repdigits. Our proof utilizes Baker's theory on linear forms in logarithms of algebraic numbers, along with the Baker-Davenport reduction technique. The computations were carried out with the help of a simple computer program in {\it Mathematica}.
\end{abstract}

\maketitle
\pagenumbering{arabic}
\pagestyle{headings}

\section{Introduction}
A Diophantine equation is an algebraic or exponential equation with two or more variables intended to be solved in terms of integers only. These types of equations are named after the ancient Greek mathematician Diophantus.\vspace{0.3cm}
\\The associated Pell sequence $(q_n)_{n\geq0}$ is defined by the binary recurrence relation
\begin{equation}
	q_{n+1}=2q_n+q_{n-1},
\end{equation}
with initial conditions $q_0 = 1,\quad q_1 = 1$. The closed form of {the} associated Pell numbers is known as Binet's formula and has the form
	$q_n={(\alpha^{n}+\beta^{n})}/{2}$,
where $(\alpha,\beta)=(1+\sqrt{2},1-\sqrt{2})$ is the pair of roots of the characteristic polynomial $x^2-2x-1$. {This easily implies that the inequality}
\begin{equation}\label{qn}
	\alpha^{n-1} \leq 2q_n <\alpha^{n+1},
\end{equation}
holds for all $n\geq1$.

A palindromic number is one that remains unchanged when its digits are reversed. A special type of palindromic number, known as a repdigit, consists of a single digit repeated multiple times in base 10.  Mathematically, a repdigit can be expressed in the form $d({10^{k}-1) }/{9}$ , where $d\in\{1,2,\ldots,9\}$ and $k\geq1$. Notably, when $k=1$, the result is simply the digit itself, representing a trivial case of a repdigit.

Several authors have explored problems related to repdigits within the context of second-order linear recurrence sequences. All Balancing
and Lucas-balancing numbers which are repdigits have been found in \cite{rp2018}. S. G. Rayaguru and G. K. Panda \cite{rp2021} investigated all balancing and Lucas-balancing numbers which can be expressed as the sums of two repdigits. Additionally, Rayaguru and Bravo \cite{rb2023} identified all Balancing and Lucas-balancing numbers formed by the concatenation of three repdigits. Erduwan et al. \cite{ekl2021} found all Fibonacci and Lucas numbers which are {the} difference of two repdigits. Edjeou and Faye \cite{ef2023}  found all Pell and Pell-Lucas numbers, which are {the} difference of two repdigits. M. G. Duman \cite{d2023} identified all Padovan numbers representable as the difference of two repdigits. More recently, Mohapatra et al. \cite{mbp2024} investigated the existence of repdigits as the difference of two Balancing or Lucas-balancing numbers, and they also enumerated all Balancing and Lucas-balancing numbers that can be expressed as the difference of two repdigits in \cite{mbp2024s}.

This paper seeks to extend previous research by investigating the fascinating realm of repdigits. Specifically, we explore repdigits that can be expressed as the difference between two associated Pell numbers, those formed by concatenating three repdigits, and associated Pell numbers that can be represented as the difference of two repdigits. To facilitate this exploration, we consider the following equations:
\begin{equation}\label{1}
	q_n - q_m = d\biggr(\frac{10^k -1}{9}\biggr).
\end{equation}
{We assume} $k \geq 1$ to avoid trivial solutions.
\begin{equation}\label{2}
	q_n= \overline{\underbrace{d_{1} \ldots d_{1}}_{m_{1} \text { times }}\underbrace{d_{2} \ldots d_{2}}_{m_{2} \text { times }}\underbrace{d_{3} \ldots d_{3}}_{m_{3} \text { times }}},
\end{equation}
where $m_1, m_2, m_3 \geq 1$, $1 \leq d_1 \leq 9$ and $0 \leq d_2, d_3 \leq 9$.
\begin{equation}\label{3}
	q_n = d_1\biggr(\frac{10^k -1}{9}\biggr)- d_2\biggr(\frac{10^l -1}{9}\biggr),
\end{equation}
where $(k, l, n)$ are positive integers with $k > l$, $k\geq2$, and $1 \leq d_1, d_2 \leq 9$.

\section{Auxiliary Results}
To solve the Diophantine equations, we will repeatedly invoke a Baker-type lower bound for a nonzero linear form in the logarithms of algebraic numbers. These lower bounds are instrumental in effectively resolving such equations. We shall commence by revisiting essential definitions and key results from the realm of algebraic number theory.

Let $\lambda$ be an algebraic number with minimal primitive polynomial 
\[f(X) = a_0(X-\lambda^{(1)})\cdot\cdot\cdot(X-\lambda^{(k)}) \in \mathbb{Z}[X],\]
where $a_0 > 0$ is the leading coefficient and $\lambda^{(i)}$'s are conjugates of $\lambda$. Then the $absolute$ $logarithmic$ $height$ of $\lambda$ is given by
\begin{equation*}
	h(\lambda) = \dfrac{1}{k}\biggr(\log a_0 + \sum\limits_{j=1}^{k}\max\{0,\log|\lambda^{(j)}|\}\biggr).
\end{equation*}
If $\lambda = a/b$ is a rational number with $\gcd(a,b)=1$ and $b>1$, then  $h(\lambda)=\log$(max$\{|a|,b\})$.  
Here are some properties of the $absolute$ $logarithmic$ $height$ whose proofs can be found in \cite{bms2006}. Let $\gamma$ and $\eta$ be two algebraic numbers, then 
	\begin{enumerate}
	    \item[(i)] $\hspace{0.2cm} h(\gamma\pm\eta) \leq h(\gamma)+${$h(\eta)$}$+\log 2,$
		\item[(ii)] $\hspace{0.2cm} h(\gamma\eta^{\pm 1})\leq h(\gamma)+h(\eta),$
		\item[(iii)]$\hspace{0.2cm} h(\gamma ^k)=|k|h(\gamma).$
	\end{enumerate}

Building on the previous notations, we present a theorem that refines a result by Matveev \cite{m2000}, as extended by Bugeaud et al. \cite{bms2006}. This theorem offers a precise upper bound for our variables in equations \eqref{1}, \eqref{2}, and \eqref{3}.

\begin{theorem}\label{thm1}\cite{m2000}. Let $\gamma_1,\ldots,\gamma_l$ be positive real numbers in an algebraic number field $\mathbb{L}$ of degree $d_{\mathbb{L}}$ and $b_1, \ldots, b_l$ be nonzero integers. If $\Gamma = \prod\limits_{i=1}^{l} \gamma_{i}^{b_i} -1$ is not zero, then 
	\begin{equation*}
		\log|\Gamma| > -1.4\cdot 30^{l+3}\cdot l^{4.5} \cdot d_{\mathbb{L}}^2 (1+\log d_{\mathbb{L}}^2)(1+\log(D))A_1 A_2 \cdot\cdot\cdot A_l,
	\end{equation*}
	where $D\geq$ $\max\{|b_1|,\ldots,|b_l|\}$ and  $A_1, \cdots, A_l$ are positive integers such that $A_j \geq h'(\gamma_j)$ = $\max\{d_{\mathbb{L}}h(\gamma_j),|\log \gamma_j|, 0.16\},$ for $j= 1,\ldots,l$.
\end{theorem}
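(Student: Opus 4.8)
The statement is Matveev's theorem, a deep explicit lower bound for a nonzero linear form in logarithms of algebraic numbers. In the present paper it functions as a cited black box (from \cite{m2000}, in the form refined by \cite{bms2006}) rather than as something to be reproved, and a complete argument would fill far more than this section. Nevertheless, the underlying strategy is the classical method of Baker, sharpened by Matveev to achieve the comparatively mild dependence $30^{l+3}\,l^{4.5}$ on the number $l$ of logarithms. The plan would be to write $\Lambda = b_1\log\gamma_1 + \cdots + b_l\log\gamma_l$, so that $\Gamma = e^{\Lambda}-1$, and to argue by contradiction: assuming $|\Lambda|$ is smaller than the asserted bound, one constructs an auxiliary object that is simultaneously forced to be very small analytically and, if nonzero, bounded away from zero arithmetically, and then shows it cannot vanish identically either.

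First I would build the auxiliary function by the Thue--Siegel pigeonhole principle (Siegel's lemma): choose integers, not all zero, furnishing a polynomial $P$ of controlled degree and logarithmic height in the quantities $\gamma_1^{z},\ldots,\gamma_l^{z}$ so that the entire function $\Phi(z)=P(\gamma_1^{z},\ldots,\gamma_l^{z})$ vanishes to high order at a block of integer points; the height bound on its coefficients follows because the relevant linear system has far more unknowns than equations. This is exactly where the size parameters $A_j \ge \max\{d_{\mathbb{L}}\,h(\gamma_j),\,|\log\gamma_j|,\,0.16\}$ and the factor $(1+\log D)$, with $D\ge\max_i|b_i|$, enter: they govern the height of the construction and the number of admissible points.

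Next comes the analytic step. Using the smallness of $\Lambda$ together with a Schwarz-lemma / maximum-modulus estimate on a disc, one bounds $\Phi$ and its derivatives at further points. The heart of the matter is the extrapolation: an induction that alternates this analytic upper bound with a Liouville-type lower bound (a nonzero algebraic number of bounded degree and height cannot be too small, cf.\ the height properties recorded above) forces $\Phi$ to vanish to ever higher order at ever more points. Matveev's improvement over the earlier auxiliary-function machinery is to recast much of this through interpolation determinants and sharp estimates for generalized Vandermonde determinants, and this is precisely what yields the favorable subexponential power of $l$.

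I expect the main obstacle — and the source of Matveev's principal gain over Baker, W\"ustholz, and others — to be the closing zero (multiplicity) estimate: one must show that a nonzero polynomial of the prescribed degree simply cannot vanish to the accumulated order, so $P$ is identically zero, contradicting its nontrivial construction. Keeping the explicit constants under control through this step while preserving the mild dependence on $l$ is the delicate part. For the Diophantine applications that follow we require only the stated inequality, so rather than reconstruct this apparatus I would invoke \cite{m2000,bms2006} and proceed to apply the bound to the concrete linear forms arising from equations \eqref{1}, \eqref{2}, and \eqref{3}.
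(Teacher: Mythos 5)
The paper gives no proof of this statement at all --- it is imported verbatim as a cited black box from \cite{m2000} (as refined in \cite{bms2006}), which is exactly what you do, and your accompanying sketch of Baker's method with Matveev's refinements (auxiliary function via Siegel's lemma, analytic extrapolation, interpolation determinants, and the closing zero estimate) is a fair description of the machinery behind the citation. Your proposal is therefore correct and takes essentially the same approach as the paper.
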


The subsequent result, recognized as the Baker-Davenport theorem and credited to Dujella and Peth\H{o} \cite{dp1998} is another tool in our proofs. It will be used to reduce the upper bounds on our variables.
\begin{lemma}\label{lem1}\cite{dp1998}. Let $M$ be a positive integer and $p/q$ denote a convergent of the continued fraction of the real number $\tau$ such that $q > 6M$. Consider the real numbers $A, B, \mu$ with $A > 0$ and $B > 1$. Let $\epsilon := \left\lVert \mu q\right\rVert - M\left\lVert \tau q\right\rVert$, where $\left\lVert .\right\rVert$ denotes the distance from the nearest integer. If $\epsilon > 0$, then there exists no solution to the inequality
	\begin{equation}
		0 <|u\tau - v +\mu|<AB^{-w},
	\end{equation}
	in positive integers $u,v,w$ with $u\leq M$ and $w\geq {\log(Aq/\epsilon)}/{\log B}$.
\end{lemma}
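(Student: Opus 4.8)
The plan is to argue by contradiction, combining the best-approximation property of the convergent $p/q$ with two elementary facts about the nearest-integer function $\lVert\cdot\rVert$: it is subadditive, and $\lVert x\rVert\le|x|$ for every real $x$. Suppose some positive integers $u,v,w$ formed a solution, with $u\le M$, $w\ge\log(Aq/\epsilon)/\log B$, and $0<|u\tau-v+\mu|<AB^{-w}$. First I would scale the middle term by $q$ and peel off an integer part, using $q\tau=p+(q\tau-p)$ to write
\begin{equation*}
q(u\tau-v+\mu)=(up-vq)+u(q\tau-p)+\mu q,
\end{equation*}
where $up-vq\in\Z$. Since $p/q$ is a convergent of $\tau$, it is a best approximation of the second kind, so $|q\tau-p|=\lVert q\tau\rVert=\lVert\tau q\rVert$; the hypothesis $q>6M$ forces $q$ to be large, which along the sequence of convergents keeps $\lVert\tau q\rVert$ small enough that the quantity $\epsilon$ can be positive.

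The crux is to bound $|u\tau-v+\mu|$ from below. As $up-vq$ is an integer, the nearest-integer distance of $q(u\tau-v+\mu)$ equals that of $\mu q+u(q\tau-p)$, and subadditivity together with $\lVert y\rVert\le|y|$ and $u\le M$ gives
\begin{equation*}
\bigl\lVert q(u\tau-v+\mu)\bigr\rVert\ge\lVert\mu q\rVert-u\,\lVert q\tau\rVert\ge\lVert\mu q\rVert-M\lVert\tau q\rVert=\epsilon.
\end{equation*}
Invoking $|x|\ge\lVert x\rVert$ once more, this produces $|u\tau-v+\mu|\ge\epsilon/q$, which is precisely where the positivity assumption $\epsilon>0$ is indispensable.

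Finally I would juxtapose the two estimates: from $\epsilon/q\le|u\tau-v+\mu|<AB^{-w}$ one gets $B^{w}<Aq/\epsilon$, and taking logarithms (valid because $B>1$) yields $w<\log(Aq/\epsilon)/\log B$, contradicting the assumed lower bound on $w$. Hence no such triple $(u,v,w)$ exists. I expect the one genuinely delicate point to be the key inequality of the second paragraph: one must carefully establish the subadditivity bound $\lVert x+y\rVert\ge\lVert x\rVert-\lVert y\rVert$ and justify that $|q\tau-p|$ equals $\lVert q\tau\rVert$ exactly because $p/q$ is a continued-fraction convergent. The surrounding steps — the scaling by $q$, the extraction of the integer $up-vq$, and the final logarithmic rearrangement — are routine once $\epsilon>0$ is secured.
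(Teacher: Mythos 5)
The paper states this lemma without proof, citing Dujella and Peth\H{o} \cite{dp1998}; your argument is correct and is essentially the original proof from that reference: multiply by $q$, absorb the integer $up-vq$, bound $\left\lVert \mu q + u(q\tau-p)\right\rVert \geq \left\lVert \mu q\right\rVert - M\left\lVert \tau q\right\rVert = \epsilon$ via subadditivity and the identity $\left\lVert q\tau\right\rVert = |q\tau-p|$ (valid here since $q > 6M \geq 6$ rules out the zeroth convergent, the only one for which best approximation of the second kind can fail), and then compare $\epsilon/q < AB^{-w}$ to get $w < \log(Aq/\epsilon)/\log B$, the desired contradiction. Your side remark is also accurate: the hypothesis $q > 6M$ plays no logical role once $\epsilon > 0$ is assumed; it merely guarantees $M\left\lVert \tau q\right\rVert < 1/6$, making the positivity of $\epsilon$ achievable in practice.
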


We conclude this section by recalling the following lemma that we need in the sequel:
\begin{lemma}\label{lem2}\cite{sl2014}. Let $r\geq 1$ and $H > 0$ be such that $H > (4r^2)^r$ and $H>L/(\log L)^r$. Then $L < 2^rH(\log H)^r$.
\end{lemma}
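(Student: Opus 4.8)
The plan is to treat both hypotheses and the conclusion after taking logarithms, reducing everything to a one-variable comparison. The condition $H > L/(\log L)^r$ is just $L < H(\log L)^r$, and taking logarithms gives
$$\log L < \log H + r\log\log L.$$
The desired conclusion $L < 2^r H(\log H)^r$ reads, after taking logarithms,
$$\log L < \log H + r\log 2 + r\log\log H.$$
Comparing the two, it suffices to show that the term $r\log\log L$ is dominated by $r(\log 2 + \log\log H)$; that is, it is enough to prove the single inequality $\log\log L \le \log 2 + \log\log H$, which is equivalent to $L \le H^2$. One may dispose of the range $L \le H$ at once, since there $L \le H < 2^r H(\log H)^r$, because $H > (4r^2)^r \ge 4$ forces $\log H > 1$.

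Next I would establish $L \le H^2$ by contradiction, and this is where the hypothesis $H > (4r^2)^r$ enters decisively. Suppose $L > H^2$, so $H < \sqrt L$; substituting into $L < H(\log L)^r$ yields $\sqrt L < (\log L)^r$, i.e., writing $u := \log L$,
$$u < 2r\log u.$$
On the other hand, $L > H^2 > (4r^2)^{2r}$ gives the lower bound $u > 2r\log(4r^2)$. To see these are incompatible I would analyze $\phi(u) := u - 2r\log u$, whose derivative $1 - 2r/u$ is positive for $u > 2r$; since $u > 2r\log(4r^2) > 2r$, the function $\phi$ is increasing throughout the relevant range. Evaluating at the endpoint,
$$\phi\big(2r\log(4r^2)\big) = 2r\log\!\left(\frac{2r}{\log(4r^2)}\right) > 0,$$
because $2r > \log(4r^2) = \log 4 + 2\log r$ for every $r \ge 1$ (the difference $2r - \log 4 - 2\log r$ is positive at $r=1$ and has nonnegative derivative $2 - 2/r$). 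Hence $\phi(u) > 0$, i.e., $u > 2r\log u$, contradicting $u < 2r\log u$. This forces $L \le H^2$.

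Finally I would combine the pieces: with $L \le H^2$ in hand, monotonicity of $\log\log$ gives $\log\log L \le \log\log(H^2) = \log 2 + \log\log H$, and substituting this into $\log L < \log H + r\log\log L$ yields $\log L < \log H + r\log 2 + r\log\log H = \log\big(2^r H(\log H)^r\big)$, which is the claim.

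I expect the main obstacle to be the elementary function analysis in the middle step rather than the bookkeeping around it. The delicate point is that the hypothesis $H > (4r^2)^r$ is calibrated exactly so that $2r > \log(4r^2)$, which is precisely what makes the endpoint value $\phi(2r\log(4r^2))$ positive and lets the monotonicity argument close; a cruder estimate such as $\log u < 2\sqrt u$ discards too much and leaves the lower and upper bounds on $u$ compatible, so some care in choosing the comparison is essential.
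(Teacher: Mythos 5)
Your proof is correct, and every step checks out: the reduction of the conclusion to the single inequality $L \le H^2$ is valid (since for $L > H > 4$ all the iterated logarithms are positive, $\log\log L \le \log 2 + \log\log H$ is indeed equivalent to $L \le H^2$), the disposal of the range $L \le H$ is what makes $\log\log L$ legitimate in the first place, and the contradiction argument is sound: from $L > H^2$ one gets $u < 2r\log u$ with $u=\log L$, while $u > 2r\log(4r^2)$ together with the monotonicity of $\phi(u)=u-2r\log u$ on $u>2r$ and the endpoint computation $\phi(2r\log(4r^2))=2r\log(2r/\log(4r^2))>0$ (using $2r>\log 4+2\log r$ for $r\ge 1$) gives $u>2r\log u$. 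One point of comparison worth noting: the paper itself contains no proof of this lemma at all; it is quoted from \cite{sl2014}, so there is no in-paper argument to measure you against. The proof in that source runs along a slightly different organizational line: one observes that $x/(\log x)^r$ is increasing for $x>e^r$ and evaluates it at $x=2^rH(\log H)^r$, reducing the claim to $\log H \ge r\log(2\log H)$, which is where $H>(4r^2)^r$ enters. Your version reorganizes the same computation into a self-contained contradiction establishing $L\le H^2$ first; the decisive calibration is identical in both (your observation that $H>(4r^2)^r$ is tuned exactly to make $2r>\log(4r^2)$ is precisely the content of the inequality $\log H\ge r\log(2\log H)$ in the original), so the two arguments are equivalent in substance, with yours being arguably more elementary in that it never invokes monotonicity of $x/(\log x)^r$ itself, only of the simpler function $u-2r\log u$.
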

\begin{lemma}\label{lem3}\cite{wb1989}. 
	Let $a,x\in \mathbb R$. If $0<a<1$ and $|x|<a$, then 
	\begin{equation*}
		|\log(1+x)|<\dfrac{-|\log(1-a)|}{a}.|x| \text{ and } |x|<\dfrac{a}{1-e^{-a}}.|e^x - 1|.
	\end{equation*} 
\end{lemma}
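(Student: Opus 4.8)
The plan is to derive both inequalities from the monotonicity of two elementary auxiliary functions, after first reducing each inequality to a comparison with the endpoint value at $a$. Neither inequality requires Baker-type input; they are soft analytic estimates whose proofs reduce to checking the sign of a derivative.

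For the first inequality I would start from the Taylor expansion $\log(1+x)=\sum_{n\ge1}(-1)^{n-1}x^n/n$, valid for $|x|<1$, and apply the triangle inequality to obtain $|\log(1+x)|\le\sum_{n\ge1}|x|^n/n=-\log(1-|x|)$. It then suffices to compare $-\log(1-|x|)$ with the claimed linear majorant. To this end I would introduce $g(t)=\frac{-\log(1-t)}{t}=\sum_{n\ge1}\frac{t^{n-1}}{n}$ on $(0,1)$ and note that $g$ is strictly increasing, since $g'(t)=\sum_{n\ge2}\frac{(n-1)t^{n-2}}{n}>0$ for $t>0$. Because $|x|<a$, monotonicity gives $g(|x|)<g(a)$, that is $-\log(1-|x|)<\frac{-\log(1-a)}{a}\,|x|=\frac{|\log(1-a)|}{a}\,|x|$, the last equality using $0<1-a<1$. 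Chaining the two bounds yields the assertion.

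For the second inequality I would set $\phi(x)=\frac{x}{e^x-1}$ for $x\ne0$ and first observe that $\frac{|x|}{|e^x-1|}=\phi(x)$ for every $x\ne0$, because $x$ and $e^x-1$ always share the same sign, so the ratio is positive in both cases $x>0$ and $x<0$. The task then becomes bounding $\phi$ on the punctured interval $0<|x|<a$. I would prove that $\phi$ is strictly decreasing on $\R$ by writing $\phi'(x)=N(x)/(e^x-1)^2$ with $N(x)=e^x(1-x)-1$; a one-line computation gives $N'(x)=-xe^x$, so $N$ attains its global maximum $N(0)=0$ and hence $N(x)<0$ for $x\ne0$, forcing $\phi'<0$. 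Strict decrease then gives $\phi(x)<\phi(-a)=\frac{a}{1-e^{-a}}$ for $-a<x<a$ with $x\ne0$, which rearranges to $|x|<\frac{a}{1-e^{-a}}\,|e^x-1|$.

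The routine parts are the series and derivative manipulations; the only genuine care needed is the bookkeeping of signs and absolute values, which is where I expect the subtlety to lie. In particular, as literally displayed the first inequality carries a stray minus sign (note $-|\log(1-a)|=\log(1-a)<0$, which cannot dominate a nonnegative quantity), so I would record that the correct positive constant is $\frac{|\log(1-a)|}{a}=\frac{-\log(1-a)}{a}>0$. I would likewise flag that the second inequality is strict only for $x\ne0$, since at $x=0$ it degenerates to $0<0$; this is harmless in the sequel because in every application $x$ arises as a nonzero linear form in logarithms.
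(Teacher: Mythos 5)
Your proof is correct, but there is nothing in the paper to compare it against: the authors state this lemma with only a citation to de Weger's CWI tract and give no proof, so your argument is a genuine addition rather than an alternative route. Both halves of your argument are sound. For the first inequality, the chain $|\log(1+x)|\le -\log(1-|x|)$ (with equality when $x<0$, strict by the alternating series when $x>0$) followed by the strict monotonicity of $g(t)=-\log(1-t)/t$ is exactly the standard proof, and your series computation of $g'$ is valid inside the radius of convergence. For the second, the sign observation $|x|/|e^x-1|=\phi(x)$ with $\phi(x)=x/(e^x-1)$, together with $N(x)=e^x(1-x)-1$, $N'(x)=-xe^x$, $N(0)=0$ forcing $\phi'<0$, is clean and complete; the evaluation $\phi(-a)=a/(1-e^{-a})$ is right. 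Your two editorial flags are also well taken: the displayed constant $-|\log(1-a)|/a$ in the paper is negative and is evidently a typo for $|\log(1-a)|/a=-\log(1-a)/a>0$, which matches de Weger's original statement. One small addendum: the degeneracy at $x=0$ that you flag for the second inequality afflicts the first one equally (both sides vanish, so strictness fails), since your monotonicity step $g(|x|)|x|<g(a)|x|$ also requires $|x|>0$; as you note, this is harmless in the paper, where the lemma is only ever applied to $x=\Gamma_i\ne 0$, the nonvanishing of which is verified separately before each application.
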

\begin{lemma}\label{lem4}\cite{rps2021}
	The only associated Pell numbers which are repdigits are 1, 3, 7, and 99.
\end{lemma}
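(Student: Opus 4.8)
The plan is to solve the Diophantine equation $q_n = d\bigl(\tfrac{10^k-1}{9}\bigr)$ in integers $n \ge 0$, $k \ge 1$, $1 \le d \le 9$ by Baker's method. First I would dispose of the small cases by a direct computation: listing $q_n$ for, say, $n \le 100$ and checking which are repdigits recovers exactly $1, 3, 7, 99$, so it suffices to show there is no solution with $n$ large. Using Binet's formula $q_n = (\alpha^n+\beta^n)/2$ together with the inequality $\alpha^{n-1} \le 2q_n < \alpha^{n+1}$ from \eqref{qn} and the trivial bounds $10^{k-1} \le d(10^k-1)/9 < 10^k$ on a $k$-digit repdigit, I would first show that $n$ and $k$ are linearly comparable, roughly $n\log\alpha \approx k\log 10$; this both confirms that $n \to \infty$ forces $k \to \infty$ and supplies a relation $k < c\,n$ needed to express the final bound in a single variable.

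The core step is to produce a small linear form in logarithms. Rewriting the equation as $\tfrac{9}{2d}\alpha^n = 10^k - 1 - \tfrac{9}{2d}\beta^n$ and dividing by $10^k$ yields
\begin{equation*}
\Bigl|\tfrac{9}{2d}\,\alpha^n\,10^{-k} - 1\Bigr| \;\le\; \frac{1}{10^k} + \frac{9\,|\beta|^n}{2d\cdot 10^k} \;<\; \frac{C}{10^k},
\end{equation*}
since $|\beta| = \sqrt2 - 1 < 1$. Set $\Lambda := \tfrac{9}{2d}\alpha^n 10^{-k} - 1$. This has the shape required by Theorem \ref{thm1} with $l = 3$, $(\gamma_1,\gamma_2,\gamma_3) = (\alpha, 10, \tfrac{9}{2d})$ and $(b_1,b_2,b_3) = (n, -k, 1)$, all lying in $\mathbb{L} = \mathbb{Q}(\sqrt2)$ of degree $2$. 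Before applying the theorem I must check $\Lambda \ne 0$: if $\Lambda = 0$ then $\tfrac{9}{2d}\alpha^n = 10^k$ is rational, forcing by conjugation $\tfrac{9}{2d}\beta^n = 10^k$ and hence $\alpha^n = \beta^n$, impossible for $n \ge 1$. With $h(\alpha) = \tfrac12\log\alpha$, $h(10) = \log 10$, $h(\tfrac{9}{2d}) \le \log 18$ and $D = n$, Matveev's bound gives $\log|\Lambda| > -K\,(1+\log n)$ for an explicit constant $K$ (the $d$-dependence being absorbed into $K$).

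Comparing this with the upper bound $\log|\Lambda| < \log C - k\log 10$ and using the relation between $n$ and $k$, I obtain an inequality of the form $n < K'(1+\log n)$; feeding this into Lemma \ref{lem2} yields an absolute, though very large, upper bound $n < N_0$. To make the search feasible I would reduce this bound with the Baker--Davenport lemma: applying Lemma \ref{lem3} to $\Lambda = e^{\Gamma}-1$ converts the estimate into $|\Gamma| < C'\,10^{-k}$ for the linear form $\Gamma = n\log\alpha - k\log 10 + \log\tfrac{9}{2d}$, and dividing by $\log 10$ puts it in the shape $|n\tau - k + \mu| < A B^{-k}$ with $\tau = \log\alpha/\log 10$ irrational and $\mu = \log(\tfrac{9}{2d})/\log 10$. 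Running Lemma \ref{lem1} with $M = N_0$, a convergent of the continued fraction of $\tau$ satisfying $q > 6M$, and the appropriate $\mu$ for each of the nine values of $d$ drops the bound on $k$ (hence on $n$) to a small number, after which the direct computation of the first paragraph finishes the proof.

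The main obstacle is the reduction step rather than the conceptual setup: the value $\mu$, and hence the quantity $\epsilon = \lVert \mu q\rVert - M\lVert \tau q\rVert$ of Lemma \ref{lem1}, changes with $d$, so the reduction must be carried out and verified to give $\epsilon > 0$ for each residue $d \in \{1,\dots,9\}$ separately; occasionally a single convergent fails and one must pass to a further one. Keeping the constant $K$ from Matveev's theorem uniform in $d$, and confirming that one pass of the reduction already brings $n$ below the computational search range, is where the care lies.
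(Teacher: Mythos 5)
Your proposal is correct and follows precisely the method that this paper (and the cited source \cite{rps2021}) uses throughout: Binet's formula to produce the small quantity $\frac{9}{2d}\alpha^{n}10^{-k}-1$, Matveev's theorem applied with $(\gamma_1,\gamma_2,\gamma_3)=(\alpha,10,\tfrac{9}{2d})$ over $\mathbb{Q}(\sqrt{2})$ with $D=n$, and then Baker--Davenport reduction of the form $|n\tau-k+\mu|<AB^{-k}$ carried out for each $d\in\{1,\dots,9\}$. Note only that the present paper does not prove this lemma at all --- it is quoted from \cite{rps2021} --- so your argument is a sound reconstruction of that reference's proof, matching the template of Theorems \ref{thm3}, \ref{thm4}, and \ref{thm5} here.
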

\begin{lemma}\label{lem5}\cite{rps2021}
	The only associated Pell numbers which are the concatenation of two repdigits are 17, 41, and 577.  
\end{lemma}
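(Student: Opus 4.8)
The plan is to follow the standard Baker's-method strategy, converting the digit equation into two linear forms in logarithms. Writing the concatenation explicitly, the equation $q_n = \overline{\underbrace{d_1\ldots d_1}_{m_1}\underbrace{d_2\ldots d_2}_{m_2}}$ becomes
\[
9 q_n = d_1\,10^{m_1+m_2} + (d_2-d_1)\,10^{m_2} - d_2,
\]
and substituting the Binet formula $q_n = (\alpha^n+\beta^n)/2$ with $M := m_1+m_2$ gives
\[
\frac{9\alpha^n}{2} = d_1\,10^{M} + (d_2-d_1)\,10^{m_2} - d_2 - \frac{9\beta^n}{2}.
\]
First I would use the digit count together with \eqref{qn} to pin down $M$ as a fixed linear function of $n$ (up to a bounded additive error), so that $M<n$ and $D=\max\{n,M\}=n$ throughout; this lets me treat $n$ as the single large parameter and $d_1,d_2\in\{0,\dots,9\}$ as finitely many cases.

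For the first linear form I would isolate the two dominant terms $\tfrac{9\alpha^n}{2}$ and $d_1 10^{M}$; since $|\beta|<1$ the remaining terms are $O(10^{m_2})$, so dividing by $d_1 10^{M}$ yields
\[
\left|\frac{9}{2d_1}\,\alpha^n\, 10^{-M} - 1\right| < c\,10^{-m_1}.
\]
Applying Theorem \ref{thm1} with $(\gamma_1,\gamma_2,\gamma_3)=(\alpha,10,\tfrac{9}{2d_1})$ and exponents $(n,-M,1)$ — after checking the form is nonzero, which holds because $\alpha^n$ is irrational for $n\geq1$ — produces a lower bound $\log|\cdot|>-C(1+\log n)$, and comparison with the upper bound gives the crucial estimate $m_1 < C'\log n$.

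For the second linear form I would instead group $d_1 10^{M}+(d_2-d_1)10^{m_2}=10^{m_2}(d_1 10^{m_1}+d_2-d_1)$, leaving only the bounded quantity $-d_2-\tfrac{9\beta^n}{2}$ on the other side; dividing by $\tfrac{9\alpha^n}{2}$ gives
\[
\left|\frac{2(d_1 10^{m_1}+d_2-d_1)}{9}\,10^{m_2}\,\alpha^{-n}-1\right| < 4\,\alpha^{-n}.
\]
Here I would apply Theorem \ref{thm1} with $\gamma_3 = \tfrac{2(d_1 10^{m_1}+d_2-d_1)}{9}$. This is the crux of the argument and the main obstacle: because $\gamma_3$ now carries the parameter $m_1$, its height is $O(m_1)=O(\log n)$ by the first step, so Matveev's bound degrades to $\log|\cdot| > -C(\log n)^2$. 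Comparison with $-n\log\alpha$ therefore yields only the implicit inequality $n < C''(\log n)^2$ rather than an immediate numerical bound, and I would resolve this by invoking Lemma \ref{lem2} (with $r=2$) to extract an absolute effective bound $n<n_0$.

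Finally, with $n$ bounded I would reduce the still-enormous $n_0$ to a practical range. Passing from the exponential forms to the associated logarithmic forms via Lemma \ref{lem3}, I would apply the Baker--Davenport Lemma \ref{lem1} to the first form (with $\tau=\log\alpha/\log10$, $\mu=\log(9/(2d_1))/\log10$, $B=10$, $w=m_1$) for each $d_1\in\{1,\dots,9\}$ to force $m_1$ into a single-digit range, and then to the second form (with $B=\alpha$, $w=n$) for each admissible tuple $(d_1,d_2,m_1)$ to collapse $n$. A short computer search over the remaining finitely many $(n,m_1,m_2,d_1,d_2)$ then isolates $q_4=17$, $q_5=41$, and $q_8=577$, while the degenerate case $d_1=d_2$ reduces to the single-repdigit problem already settled in Lemma \ref{lem4}. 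Beyond the $(\log n)^2$ issue, the only subtleties are verifying nonvanishing of each linear form and treating the small-$m_1$ and small-$m_2$ edge cases directly in the final search.
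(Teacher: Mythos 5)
The paper does not prove this lemma at all: it is quoted directly from Rayaguru--Panda--\c{S}iar \cite{rps2021}, so there is no in-paper proof to compare against, and your blind reconstruction must be judged on its own. It is correct, and it is essentially the same Baker-method template that both the cited source and this paper's own proof of Theorem \ref{thm4} follow --- two applications of Theorem \ref{thm1}, the second with $h(\gamma_3)=O(m_1)=O(\log n)$ so that the resulting inequality $n\ll(\log n)^2$ is closed by Lemma \ref{lem2} with $r=2$, then Baker--Davenport reduction (Lemmas \ref{lem1} and \ref{lem3}) on each form, with the degenerate case $d_1=d_2$ deferred to Lemma \ref{lem4}; the only cosmetic inaccuracy is your expectation that the first reduction forces $m_1$ into a ``single-digit range,'' whereas such reductions typically give a bound of a few dozen (compare $m_1\leq 83$ in the three-repdigit argument of Theorem \ref{thm4}), which is harmless since your second reduction runs over all admissible $m_1$ anyway.
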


\section{Repdigits as the difference of two associated Pell numbers}

\begin{theorem}\label{thm3}All solutions of the Diophantine Equation \eqref{1} in positive integers  with $n>m$, $k\geq1$, $1\leq d\leq9$, are 
$(n,m,d,k)$ $\in$ \{(2,0,2,1), (2,1,2,1), (3,2,4,1), (3,0,6,1), (3,1,6,1), (7,4,4,3)\}.

\end{theorem}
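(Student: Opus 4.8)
The plan is to run the standard Baker--Davenport program: derive crude size estimates linking $k$ to $n$, extract an astronomical upper bound for $n$ through two successive applications of Matveev's theorem (Theorem~\ref{thm1}), and then collapse that bound to a searchable range with the reduction Lemma~\ref{lem1}. First I would record the elementary inequalities. Because $q_m\ge 1$ and $n>m$, the difference satisfies $q_{n-1}<q_n-q_m<q_n$, while $10^{k-1}\le d\,\tfrac{10^k-1}{9}<10^{k}$. Feeding these into \eqref{qn} yields $k\log 10\asymp n\log\alpha$, so that $k<n$ and a bound on $n$ automatically bounds $k$ and $m$. Writing Binet's formula into \eqref{1} as $\frac{\alpha^n}{2}-\frac{d}{9}10^k=q_m-\frac{\beta^n}{2}-\frac d9$ and dividing by $\alpha^n/2$ produces the first linear form
\[
\Lambda_1:=\Bigl|1-\tfrac{2d}{9}\,10^{k}\alpha^{-n}\Bigr|<c_1\,\alpha^{m-n},
\]
the right-hand side being $O(q_m)=O(\alpha^m)$ relative to $\alpha^n/2$. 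Here $\Lambda_1\neq 0$ since $\alpha^n\notin\mathbb Q$ (apply the conjugation $\sqrt2\mapsto-\sqrt2$). Taking $(\gamma_1,\gamma_2,\gamma_3)=(10,\alpha,\tfrac{2d}{9})$, whose heights are absolutely bounded, and $D\asymp n$, Theorem~\ref{thm1} gives $\log|\Lambda_1|>-C_1\log n$; comparing with the upper bound forces $(n-m)\log\alpha<C_2\log n$, i.e.\ $n-m\ll\log n$.

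Next I would group the two $\alpha$-powers. Using $q_n-q_m=\frac{\alpha^m(\alpha^{n-m}-1)}{2}+\frac{\beta^n-\beta^m}{2}$ together with $|\beta^n-\beta^m|<1$, equation \eqref{1} rearranges to the second linear form
\[
\Lambda_2:=\Bigl|\tfrac{9(\alpha^{n-m}-1)}{2d}\,\alpha^{m}10^{-k}-1\Bigr|<c_2\,10^{-k}.
\]
Again $\Lambda_2\neq0$, since $\alpha^n-\alpha^m\notin\mathbb Q$ for $n>m$ (the $\sqrt2$-coefficient of $\alpha^j$ is strictly increasing). The crucial feature is that $\gamma_3=\frac{9(\alpha^{n-m}-1)}{2d}\in\mathbb Q(\sqrt2)$ now has a \emph{non-absolute} height: by the height properties, $h'(\gamma_3)\ll n-m\ll\log n$. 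Applying Theorem~\ref{thm1} with $(\gamma_1,\gamma_2)=(\alpha,10)$ therefore yields $\log|\Lambda_2|>-C_3(\log n)^2$, whence $k\log 10\ll(\log n)^2$ and, via $k\asymp n$, the inequality $n\ll(\log n)^2$. Lemma~\ref{lem2} (with $r=2$) then converts this into an explicit bound $n<N_0$. Throughout, Lemma~\ref{lem3} is what lets me pass between $|\Lambda_i|$ and the genuine logarithmic forms $|k\log10-n\log\alpha+\log\tfrac{2d}{9}|$ and its analogue.

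Finally, since $N_0$ is far too large for a direct search, I would reduce it with Lemma~\ref{lem1}. Setting $\tau=\log\alpha/\log 10$, $\mu=\mu(d)$, and $M=N_0$, I would choose a continued-fraction convergent $p/q$ of $\tau$ with $q>6M$, verify $\epsilon=\lVert\mu q\rVert-M\lVert\tau q\rVert>0$, and thereby cap $k$ (hence $n$) at a small value after one or two rounds, running the computation separately for each digit $d\in\{1,\dots,9\}$ and each admissible $n-m$ in the reduced range. A brute-force check over the surviving finite window, carried out in \emph{Mathematica}, then leaves exactly the six tuples claimed. The main obstacle I anticipate is the bootstrapping in the second form: because $\gamma_3$ depends on $n-m$, its height is not constant, so the first-stage estimate $n-m\ll\log n$ must be threaded carefully through Matveev's bound and Lemma~\ref{lem2}. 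Secondarily, the reduction has to be organised as a finite but sizeable loop over the pairs $(d,n-m)$, and the small-$n$, small-$(n-m)$ boundary cases, where the asymptotic inequalities are not yet valid, must be cleared by direct computation.
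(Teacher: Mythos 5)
Your proposal follows essentially the same route as the paper: the same two rearrangements of Binet's formula (your $\Lambda_2$ is the paper's $\Gamma_2$ up to taking the reciprocal of the product), the same bootstrapping of the bound $n-m\ll\log n$ into the non-constant height of $\gamma_3$ in the second Matveev application, Lemma~\ref{lem2} with $r=2$ to get an explicit bound ($3.2\cdot10^{30}$ in the paper), and the same two-stage Baker--Davenport reduction over $d$ and the surviving values of $n-m$, finishing with a direct search. The anticipated obstacles you flag are exactly the points the paper handles, so the plan is sound and matches the published argument.
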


\begin{proof}  Using $Mathematica$, we get the repdigits as the difference of two associated Pell numbers for $n\leq100$ as listed in Theorem \ref{thm3}. So, assume that $n>100$. By \eqref{qn} and \eqref{1}, the inequality \begin{equation*}
		10^{k-1}<{d(10^k-1)}/{9}=q_n-q_m\leq2q_{n}<\alpha^{n+1}<10^{n+1},
		\end{equation*}
    implies that $ k<n+2.$
Using Binet's formula, \eqref{1} can be written as
	\begin{equation}\label{9}
		\frac{\alpha^{n}+\beta^{n}}{2} - \frac{\alpha^{m}+\beta^{m}}{2} = d\biggr(\frac{10^{k}-1 }{9}\biggr).
	\end{equation}
	We will now rearrange equation \eqref{9} into two distinct cases, as outlined below.
 \subsection{Case 1} First rearrangement of \eqref{9} is $$\frac{\alpha^{n}}{2} - \frac{d\cdot10^k}{9} = \frac{\alpha^{m}}{2} + \frac{\beta^{m}}{2} - \frac{\beta^{n}}{2} - \frac{d}{9}.$$ By applying the absolute value to both sides, we arrive at
	\begin{equation*}
		\biggr|\frac{\alpha^{n}}{2} - \frac{d\cdot 10^k}{9}\biggr| \leq \biggr|\frac{\alpha^{m}}{2}\biggr| + 4 \leq \frac{9\alpha^{m}}{2}.
	\end{equation*} 
	After applying $\frac{\alpha^{n}}{2}$ as a divisor to both sides of the inequality, we find
	\begin{equation}\label{10}
		{\biggr|1- \alpha^{-n}10^{k}\biggr(\frac{2d}{9}\biggr)\biggr|} <\frac{9}{\alpha^{n-m}}.
	\end{equation}
	{Let }$\Gamma_1$ = $1-\alpha^{-n}10^{k}({2d}/{9})$. It is enough to check that $\Gamma_1 \neq 0$. On the contrary, suppose $\Gamma_1=0$, then $\alpha^{n}=10^{k} ({2d}/{9}) \in \mathbb{Q}$, which contradicts the fact that $\alpha^{n}$ is irrational for any $n > 0$. Therefore, $\Gamma_1 \neq 0$.
	To implement Theorem \ref{thm1}, define 
	\begin{equation*}
		\lambda_1 = \alpha, \lambda_2 = 10, \lambda_3 = {2d}/{9}, b_1 = -n, b_2 = k, b_3 = 1, l = 3,
	\end{equation*}  where $\lambda_1$, $\lambda_2$, $\lambda_3$ $\in \mathbb{Q}[\sqrt{2}]$ and $b_1, b_2, b_3 \in \mathbb{Z}$. It can be observed that $\mathbb{Q}(\lambda_1, \lambda_2, \lambda_3) = \mathbb{Q}(\alpha)$, so $d_\mathbb{L} = 2$.
	Since $k < n+2$, we have $D=$max$\{n, k, 1\} = n+2$. The absolute logarithmic heights of $\lambda_1$, $\lambda_2$ and $\lambda_3$ are computed as $h(\lambda_1)=(\log\alpha)/2$, $h(\lambda_2)= \log 10$ and $h(\lambda_3)\leq \log(2\cdot 8)=\log 16$ .
	 Thus, we can express the following:
	\begin{equation*}
	  \begin{split}  
	 & \max\{2h(\lambda_1),|\log \lambda_1|, 0.16 \} = \log \alpha = A_1,\\
  & \max\{2h(\lambda_2),|\log \lambda_2|, 0.16 \} = 2\log 10 = A_2,\\ 
  & \max\{2h(\lambda_3),|\log \lambda_3|, 0.16 \} \leq \log 16 = A_3. \end{split} \end{equation*}
	Now, utilizing Theorem \ref{thm1}, we can determine a lower bound for $\log|\Gamma_1|$ as \begin{equation*}
		\log|\Gamma_1| > -1.4\cdot30^6\cdot3^{4.5}\cdot2^2 (1+\log 2)(1+\log (n+2))(\log\alpha)(2\log 10)(\log 16).\end{equation*}
	By comparing the inequality presented above with \eqref{10}, we find that
	\begin{equation}\label{11}
		(n-m)\log\alpha < \log 9 + 1.1 \cdot 10^{13} (1+\log (n+2)) < 1.2 \cdot 10^{13} (1+\log (n+2)).
	\end{equation}
	\subsection{Case 2} We perform the second rearrangement of \eqref{9} as
	\begin{center}
		$\dfrac{\alpha^{n}}{2} - \dfrac{\alpha^{m}}{2} - \dfrac{d\cdot 10^k}{9} =  \dfrac{\beta^{m}}{2} - \dfrac{\beta^{n}}{2} - \dfrac{d}{9}$.
		\end{center}
	By taking the absolute values on both sides of the inequality, we arrive at
	\begin{equation*}
		\biggr|\frac{\alpha^{n}}{2} - \frac{\alpha^{m}}{2} - \frac{d\cdot 10^k}{9}\biggr| \leq 4.
	\end{equation*} 
	When we divide both sides of the inequality by $\alpha^{n}(1-\alpha^{m-n})/2$, it yields
	\begin{equation}\label{12}
		{\biggr|1- \alpha^{-n}10^{k}\biggr(\frac{2d}{9(1-\alpha^{m-n})}\biggr)\biggr|} <\frac{5}{\alpha^{n}}.
	\end{equation}
	Define $\Gamma_2$ = $1- \alpha^{-n}10^{k}({2d}/{9(1-\alpha^{m-n})})$. In the same way, it can be shown that $\Gamma_2 \neq 0$. Here we have $h(\lambda_1)=h(\alpha)=(\log\alpha)/2$ and $h(\lambda_2)=h(10)=\log 10$. Let $\lambda_3$ = ${2d}/({9(1-\alpha^{m-n})})$. Then, 
	\begin{equation*}
		\begin{split}
			h(\lambda_3) & \leq h(2d) + h(9(1-\alpha^{m-n}))\\
			& \leq 2\log 9+2\log 2+(n-m)\frac{\log\alpha}{2}\\
			& < 5.8+ (n-m) \frac{\log\alpha}{2}.
		\end{split}
	\end{equation*}
	Accordingly, we define $A_3 =11.6+ (n-m)\log\alpha$.
	 By virtue of Theorem \ref{thm1},
    \begin{equation}\label{11.1}
        \begin{split}
			\log|\Gamma_2| >  -& 1.4\cdot30^6\cdot3^{4.5}\cdot2^2 (1+\log 2)(1+\log (n+2))(\log\alpha)(2\log 10)\\
			& \times(11.6+ (n-m)\log\alpha).
		\end{split}
    \end{equation}
    Putting the value of $n-m$ from \eqref{11} we derive 
	\begin{equation*}
		\begin{split}
			\log|\Gamma_2| >  -& 1.4\cdot30^6\cdot3^{4.5}\cdot2^2 (1+\log 2)(1+\log (n+2))(\log\alpha)(2\log 10)\\
			& \times(1.3\cdot 10^{13})(1+\log (n+2)).
		\end{split}
	\end{equation*}
    A comparison of the above inequality with \eqref{12} yields
	\begin{equation*}
		n\log\alpha < \log 5 + 5.2 \cdot 10^{25} (1+\log (n+2))^2 < 8 \cdot 10^{25} (\log n)^2,
	\end{equation*}
    where we have used the fact that $(1+\log (n+2))^2<1.5 (\log n)^2$ for all $n>100.$
	With the notations of Lemma \ref{lem2}, we take $r=2, L=2, H= 8\cdot10^{25}/\log \alpha$ to get 
	\begin{equation*}
		n < 2^2\biggr(\dfrac{8.5\cdot10^{25}}{\log\alpha}\biggr) \biggr(\log\biggr(\dfrac{8.5\cdot10^{25}}{\log \alpha}\biggr)\biggr)^2 < 1.3\cdot10^{30}.
	\end{equation*}
	\subsection{Reducing the upper bound of $n$}
	We now need to apply the Baker-Davenport reduction method, as developed by Dujella and Peth\H{o} \cite{dp1998}, to refine the bound. Let 
	\begin{equation*}
		\Lambda_1 = -n\log\alpha + k\log 10 + \log({2d}/{9}) .
	\end{equation*}
	We can rewrite the inequality \eqref{10} as
	\begin{equation*}|e^{\Lambda_1} - 1| < {9}/{\alpha^{n-m}}.
		\end{equation*}
	Observe that $\Lambda_1 \neq 0$ as $e^{\Lambda_1} - 1 = \Gamma_1 \neq 0$.
	Under the assumption that $n-m\geq 4$, the right-hand side of the above inequality is at most ${9}/{(1+\sqrt{2})^4}<{1}/{2}$. From the inequality $|e^{z} - 1| < y$ with real values of $z$ and $y$, we deduce that $z< 2y$. This leads us to the result $|\Lambda_1| < {18}/{\alpha^{n-m}}$, which indicates that
	\begin{center}$\biggr|-n\log\alpha + k\log 10 + \log({2d}/{9}) \biggr| < {18}/{\alpha^{n-m}}$.
	\end{center}
	Upon dividing both sides of the inequality by $\log \alpha,$ we find\begin{equation}\label{13}
		\biggr|k\biggr(\frac{\log 10}{\log\alpha}\biggr) - n + \frac{\log({2d}/{9})}{\log\alpha}\biggr| <\frac{21}{\alpha^{n-m}}.
	\end{equation}
	In order to implement Lemma \ref{lem1}, let us define \begin{center}
		$u=k, \tau=\dfrac{\log 10}{\log\alpha}, v = n, \mu= \biggr(\dfrac{\log({2d}/{9})}{\log\alpha}\biggr), A= 21, B= \alpha, w = n-m$. 
	\end{center}
	We can set $M= 1.3\cdot10^{30}$ as an upper bound on $u$. The denominator of 68-th convergent of $\tau$, denoted as $q_{68}= 27232938992914655197439992935676$, exceeds $6M$. Considering the fact that $1\leq d\leq 9$, a quick computation with $Mathematica$ gives {us} the least possible positive value of $\epsilon := \left\lVert \mu q_{68}\right\rVert - M\left\lVert \tau q_{68}\right\rVert = 0.131525$ for $d=8$. Applying Lemma \ref{lem1} to the inequality \eqref{13}, we obtain $n-m\leq 87$. Consequently, substituting this upper bound for $n-m$
into \eqref{11.1}, we get $n<2.4\cdot 10^{15}$. Now, let \begin{equation*}
		\Lambda_2 = -n\log\alpha + k\log 10 + \log\biggr(\dfrac{2d}{9(1-\alpha^{m-n})}\biggr). 
	\end{equation*}
	The inequality \eqref{12} can be expressed as \begin{equation*}|e^{\Lambda_2} - 1| < \dfrac{5}{\alpha^{n}}.
	\end{equation*}
	Observe that $\Lambda_2 \neq 0$ as $e^{\Lambda_2} - 1 = \Gamma_2 \neq 0$.
	Assuming $n\geq 2$, {the right-hand side of the above inequality is at most $1/2$.} The inequality $|e^{z} - 1| < y$ for real values of $z$ and $y$ leads to the conclusion that
 $z < 2y$. Therefore, we can assert $|\Lambda_2| < \frac{10}{\alpha^{n}}$, which implies that \begin{equation*}
\biggr|-n\log\alpha + k\log 10 + \log\biggr(\dfrac{2d}{9(1-\alpha^{m-n})}\biggr) \biggr| < \dfrac{10}{\alpha^{n}}.\end{equation*}Dividing both sides of the above inequality by $\log \alpha$ yields
	\begin{equation}\label{14}
		\biggr|k\biggr(\frac{\log 10}{\log\alpha}\biggr) - n + \dfrac{\log (\frac{2d}{9(1-\alpha^{m-n})})}{\log\alpha}\biggr| <\frac{12}{\alpha^{n}}.
	\end{equation}
	Let \begin{equation*}u=k, \tau=\dfrac{\log 10}{\log\alpha}, v = n, \mu= \biggr(\dfrac{\log(\frac{2d}{9(1-\alpha^{m-n})})}{\log\alpha}\biggr), A= 12, B= \alpha, w = n.
	\end{equation*}
	 Choose $M = 2.4\cdot 10^{15}$. We find $q_{41}= 3091088636788945 $, the denominator of 41-th convergent of $\tau$ exceeds $6M$. For $1\leq d\leq 9$, and $0\leq n-m \leq 87,$ a quick computation with $Mathematica$ gives us the minimum positive $ \epsilon := \left\lVert \mu q_{41}\right\rVert - M\left\lVert \tau q_{41}\right\rVert = 0.0002045763 $ for $d=6$ and $n-m=30$. Now, applying Lemma \ref{lem1} we find that the inequality \eqref{14} has no solutions for $n\geq55.53$. So $n\leq55.$ This conclusion directly contradicts our initial assumption that $n > 100$.
\end{proof}

\section{associated Pell numbers which are the concatenation of three repdigits} 
\begin{theorem}\label{thm4}
	The only associated Pell numbers which are the concatenation of three repdigits are 239, 3363, and 8119.
\end{theorem}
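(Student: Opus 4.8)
The plan is to mirror the linear-forms-in-logarithms machinery of Theorem \ref{thm3}, but now peeling off the three repdigit blocks one at a time. Set $M=m_1+m_2+m_3$ for the total number of digits; expanding the concatenation in \eqref{2} gives
\begin{equation*}
q_n=\frac{1}{9}\left(d_1\,10^{M}+(d_2-d_1)\,10^{m_2+m_3}+(d_3-d_2)\,10^{m_3}-d_3\right).
\end{equation*}
Since $q_n$ has exactly $M$ digits, $10^{M-1}\le q_n<10^{M}$, and combining this with $\alpha^{n-1}\le 2q_n<\alpha^{n+1}$ from \eqref{qn} shows that $n$ and $M$ are linearly comparable (so $M<n$ for large $n$) and lets every subsequent bound be phrased in terms of $n$ alone. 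A \emph{Mathematica} search then settles all $n$ up to a convenient bound, recovering $239,3363,8119$, so from here on I assume $n$ large.

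Next I would substitute Binet's formula and rearrange three times, each time isolating one more block and absorbing the tail (which always contains the bounded term $\frac{9}{2}\beta^n$, $|\beta|<1$) into the error. Using $\alpha^n>10^{M-1}$ to convert powers of $10$ into negative powers $10^{-m_i}$, the first rearrangement yields
\begin{equation*}
\left|1-\frac{2d_1}{9}\,10^{M}\,\alpha^{-n}\right|<\frac{C_1}{10^{m_1}},
\end{equation*}
a form in the logarithms of $\alpha,10,\frac{2d_1}{9}$ with heights $O(1)$, so Theorem \ref{thm1} gives $m_1<c_1(1+\log n)$. Grouping the first two blocks gives
\begin{equation*}
\left|1-\frac{2(d_1 10^{m_1}+d_2-d_1)}{9}\,10^{m_2+m_3}\,\alpha^{-n}\right|<\frac{C_2}{10^{m_2}},
\end{equation*}
whose third logarithm has height $O(m_1)=O(\log n)$, so Theorem \ref{thm1} now gives $m_2<c_2(1+\log n)^2$. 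Finally, collecting the first three exact blocks leaves only $-d_3-\frac{9}{2}\beta^n$ on the right, producing
\begin{equation*}
\left|1-\frac{2D}{9}\,10^{m_3}\,\alpha^{-n}\right|<\frac{C_3}{\alpha^{n}},\qquad D=d_1 10^{m_1+m_2}+(d_2-d_1)10^{m_2}+(d_3-d_2),
\end{equation*}
whose coefficient has height $O(m_1+m_2)=O((\log n)^2)$; Theorem \ref{thm1} then yields $n\log\alpha<c_3(1+\log n)^3$, and Lemma \ref{lem2} converts this into an explicit (huge) upper bound on $n$.

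To reduce these bounds I would, for each of the three forms, set $\Lambda=-n\log\alpha+\ell\log 10+\log(\mathrm{coeff})$ with $\ell\in\{M,\,m_2+m_3,\,m_3\}$, check $\Lambda\neq 0$ (otherwise $\alpha^n$ would be rational), pass from $|e^{\Lambda}-1|<\cdots$ to $|\Lambda|<\cdots$ via Lemma \ref{lem3}, divide by $\log\alpha$, and apply Lemma \ref{lem1} with $\tau=\log 10/\log\alpha$. Because the coefficients of the second and third forms carry the already-bounded $m_1$ (respectively $m_1,m_2$) inside them, each of these two reductions is not a single computation but a family indexed by the digits $d_i$ and by $m_1$ (respectively $m_1,m_2$) ranging over their reduced intervals; running all cases collapses the bound on $n$ to a small number, which a final direct check clears.

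The main obstacle is exactly this nesting. Each linear form inherits the previous block as part of an algebraic coefficient, so its height — and hence the Matveev bound — picks up another factor of $\log n$ at every stage, making the final bound on $n$ enormous, and the Baker--Davenport reductions become families of instances rather than single runs. Keeping the number of cases under control, confirming $\epsilon>0$ in each, and handling the degenerate digit patterns ($d_1=d_2$, $d_2=d_3$, $d_2=0$, etc.) in which a block coefficient collapses, is where the real care will be required.
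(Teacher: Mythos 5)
Your proposal follows essentially the same route as the paper's proof: the identical three successive rearrangements of the digit expansion, with coefficient heights growing like $O(1)$, $O(\log n)$, $O((\log n)^2)$, Matveev's bound (Theorem \ref{thm1}) at each stage plus Lemma \ref{lem2} to extract an absolute bound on $n$, and then three nested Baker--Davenport reductions (Lemma \ref{lem1}, via Lemma \ref{lem3}) with $\tau=\log 10/\log\alpha$, run as families over the digits and the already-bounded $m_1$ (respectively $m_1,m_2$). The one point you flag but leave open --- the degenerate patterns $d_1=d_2$ and $d_2=d_3$, for which $\epsilon>0$ indeed fails in the final reduction --- is dispatched in the paper at the outset by Lemmas \ref{lem4} and \ref{lem5}, since those cases make $q_n$ a concatenation of at most two repdigits, already classified.
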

\begin{proof}
	
	Assuming that \eqref{2} holds, we examined the first 100 associated Pell numbers and found that the solutions to the Diophantine equation \eqref{2} are $q_{n} \in\{239, 3363, 8119\}$ for $d_{1}, d_{2} \in\{0,1, \ldots, 9\}$ with $d_{1}>0$. From this point forward, we assume that $n > 100$. The scenarios $d_{1}=d_{2} \neq d_{3}$ and $d_{1} \neq d_{2}=d_{3}$ in \eqref{2} are ruled out, as the only associated Pell numbers, which are the concatenation of two repdigits are 17, 41, 577 by Lemma \ref{lem5}. Furthermore, the case $d_{1}=d_{2}=d_{3}$ in \eqref{2} is also impossible since the largest repdigit in the associated Pell sequence is 99 as stated in Lemma \ref{lem4}.\vspace{0.5cm}
 \\Let us define
	\begin{equation*}
		q_n= \overline{\underbrace{d_{1} \ldots d_{1}}_{m_{1} \text { times }}\underbrace{d_{2} \ldots d_{2}}_{m_{2} \text { times }}\underbrace{d_{3} \ldots d_{3}}_{m_{3} \text { times }}} \hspace{0.1cm}.\end{equation*}
	Then we can express $q_n$ as:
	$$
	q_{n}=\underbrace{d_{1} \ldots d_{1}}_{m_{1} \text { times }} \cdot 10^{m_{2}+m_{3}}+\underbrace{d_{2} \ldots d_{2}}_{m_{2} \text { times }} \cdot 10^{m_{3}}+\underbrace{d_{3} \ldots d_{3}}_{m_{3} \text { times }} \hspace{0.1cm},
	$$
	which leads us to
	\begin{equation}\label{15}
		q_{n}=\frac{d_{1}\left(10^{m_{1}}-1\right)}{9} 10^{m_{2}+m_{3}}+\frac{d_{2}\left(10^{m_{2}}-1\right)}{9} 10^{m_{3}}+\frac{d_{3}\left(10^{m_{3}}-1\right)}{9}\hspace{0.1cm}.
	\end{equation}
	Alternatively, we can express it as:
	\begin{equation}\label{16}
		q_{n}=\frac{1}{9}\left(d_{1} 10^{m_{1}+m_{2}+m_{3}}-\left(d_{1}-d_{2}\right) 10^{m_{2}+m_{3}}-\left(d_{2}-d_{3}\right) 10^{m_{3}}-d_{3}\right) . 
	\end{equation}
	Combining the {right-hand }side of inequality \eqref{qn} with \eqref{15}, we arrive at the following relationship:
	$$
	10^{m_{1}+m_{2}+m_{3}-1}<q_{n}<\alpha^{n+1}<10^{n+1}.
	$$
	From this, we can conclude that: $m_{1}+m_{2}+m_{3}<n+2$. 
 We will now rearrange equation \eqref{16} in three distinct cases by using the Binet's formula of associated Pell numbers.

 \subsection{Case 1} The first rearrangement of \eqref{16} is given by
	\begin{equation}\label{17}
		\frac{9 \alpha^{n}}{2}-d_{1} 10^{m_{1}+m_{2}+m_{3}}=-\biggr(\frac{9 \beta^{n}}{2}+\left(d_{1}-d_{2}\right) 10^{m_{2}+m_{3}}+\left(d_{2}-d_{3}\right) 10^{m_{3}}-d_{3}\biggr) .
	\end{equation}
	Taking the absolute values of both sides of \eqref{17} yields
	$$
	\begin{aligned}
		\left|\frac{9 \alpha^{n}}{2}-d_{1} 10^{m_{1}+m_{2}+m_{3}}\right| & \leq \frac{9 \beta^{n}}{2}+\left(d_{1}-d_{2}\right) 10^{m_{2}+m_{3}}+\left(d_{2}-d_{3}\right) 10^{m_{3}}+d_{3} \\
		& \leq \frac{9 \alpha^{-n}}{2}+9 \cdot 10^{m_{2}+m_{3}}+9 \cdot 10^{m_{3}}+9 \\
		& \leq \frac{9 \alpha^{-n}}{2}+\left(9 \cdot 10^{m_{2}+m_{3}}+9 \cdot 10^{m_{3}}+0.9 \cdot 10^{m_{3}}\right) \\
		& =\frac{9 \alpha^{-n}}{2}+10^{m_{3}}\left(9 \cdot 10^{m_{2}}+9.9\right) \\
		& \leq \frac{9 \alpha^{-n}}{2}+10^{m_{3}}\left(9 \cdot 10^{m_{2}}+0.99 \cdot 10^{m_{2}}\right) \\
		& <\frac{0.09 \cdot \alpha^{-n} \cdot 10^{m_{2}+m_{3}}}{2}+9.99 \cdot 10^{m_{2}+m_{3}} \\
		& <9.991 \cdot 10^{m_{2}+m_{3}},
	\end{aligned}
	$$
	where $n >100$. Therefore,
	\begin{equation}\label{18}
		\left|\frac{9 \alpha^{n}}{2}-d_{1} 10^{m_{1}+m_{2}+m_{3}}\right|<9.991 \cdot 10^{m_{2}+m_{3}} .
	\end{equation}
	Dividing both sides of \eqref{18} by $d_{1} 10^{m_{1}+m_{2}+m_{3}}$  gives us
	\begin{equation}\label{19}
		\left|\frac{9}{2 d_{1}} \alpha^{n} 10^{-m_{1}-m_{2}-m_{3}}-1\right|<\frac{9.991}{10^{m_{1}}}.
	\end{equation}
	Now, let us apply Theorem \ref{thm1} with the parameters $\gamma_{1}:=9 /\left(2 d_{1}\right), \gamma_{2}:=\alpha, \gamma_{3}:=10$ and $b_{1}:=1, b_{2}:=n$, $b_{3}:=-m_{1}-m_{2}-m_{3}$. It is worth noting that $\gamma_{1}, \gamma_{2}$, and $\gamma_{3}$ are positive real numbers and belong to the field $\mathbb K=\mathbb Q[\sqrt{2}]$, which has degree $d_{\mathbb L} = 2$. We define:
	$$
	\Gamma_3:=(({9}/{2 d_{1}}) \alpha^{n} 10^{-m_{1}-m_{2}-m_{3}})-1.
	$$
	Setting $\Gamma_3=0$, we arrive at
	$$
	\alpha^{n}={(2 d_{1}}/{9}) \cdot 10^{m_{1}+m_{2}+m_{3}}.
	$$
	However, this leads to a contradiction as $\alpha^{n}$ is irrational for $n \geq 1$, which implies $\Gamma_3$ must indeed be nonzero. Furthermore, utilizing the properties of absolute logarithmic height, we can obtain 
	$$
	h\left(\gamma_{1}\right)<2.9,
	h\left(\gamma_{2}\right)={(\log \alpha)}{/2}<0.45,\text{ and } h\left(\gamma_{3}\right)=\log 10<2.31 .
	$$
 Now, we can assign values $A_{1}:=5.8, A_{2}:=0.9$, and $A_{3}:=4.62$.
	 Since, $m_{1}+m_{2}+m_{3}<n+2$ and $D \geq \max \left\{|1|,|n|,\left|-m_{1}-m_{2}-m_{3}\right|\right\}$, we can conveniently set $D:=n+2$. Let us define:
	$
	C:=1.4 \cdot 30^{6} \cdot 3^{4.5} \cdot 2^{2} \cdot(1+\log 2) \cdot 0.9 \cdot 4.62.
	$
	Analyzing  inequality \eqref{19} in conjunction with Theorem \ref{thm1}, we obtain
	$$
	9.991 \cdot 10^{-m_{1}}>\left|\Gamma_3\right|>\exp (-C \cdot(1+\log (n+2)) \cdot 5.8).
	$$
	A straightforward computation yields the inequality
	\begin{equation}\label{20}
		m_{1} \log 10< 2.3\cdot 10^{13} \cdot(1+\log (n+2))+\log 9.991 .
	\end{equation}
\subsection{Case 2}
 
Proceeding with the second rearrangement of equation \eqref{16} as
	\begin{equation}\label{21}
		\frac{9 \alpha^{n}}{2}-\left(d_{1} 10^{m_{1}}-\left(d_{1}-d_{2}\right)\right) 10^{m_{2}+m_{3}}=-\biggr(\frac{9 \beta^{n}}{2}+\left(d_{2}-d_{3}\right) 10^{m_{3}}+d_{3}\biggr) 
	\end{equation}
	and taking absolute values of both sides of \eqref{21}, we get
 $$ \biggr|\frac{9 \alpha^{n}}{2}-\left(d_{1} 10^{m_{1}}-\left(d_{1}-d_{2}\right)\right) 10^{m_{2}+m_{3}}\biggr|=\biggr|-\biggr(\frac{9 \beta^{n}}{2}+\left(d_{2}-d_{3}\right) 10^{m_{3}}+d_{3}\biggr)\biggr| .$$
	This equality leads to a series of inequalities based on the properties of absolute values as$$
	\begin{aligned}
		\left|({9 \alpha^{n}}/{2})-\left(d_{1} 10^{m_{1}}-\left(d_{1}-d_{2}\right)\right) 10^{m_{2}+m_{3}}\right| & \leq \frac{9 \beta^{n}}{2}+\left(d_{2}-d_{3}\right) 10^{m_{3}}+d_{3} \\
		& \leq \frac{9 \alpha^{-n}}{2}+9 \cdot 10^{m_{3}}+9 \\
		& \leq \frac{9 \alpha^{-n}}{2}+\left(9 \cdot 10^{m_{3}}+0.9 \cdot 10^{m_{3}}\right) \\
		& <\frac{(0.9) \alpha^{-n} 10^{m_{3}}}{2}+9.9 \cdot 10^{m_{3}} \\
		& <9.91 \cdot 10^{m_{3}}
	\end{aligned}
	$$
	i.e.
	\begin{equation}\label{22}
		\left|({9 \alpha^{n}}/{2})-\left(d_{1} 10^{m_{1}}-\left(d_{1}-d_{2}\right)\right) 10^{m_{2}+m_{3}}\right|<9.91 \cdot 10^{m_{3}} .
	\end{equation}
	Dividing both sides of \eqref{22} by $\left(d_{1} 10^{m_{1}}-\left(d_{1}-d_{2}\right)\right) 10^{m_{2}+m_{3}}$, we arrive at
	\begin{equation}\label{23}
		\left|1-\left(\frac{9}{2\left(d_{1} 10^{m_{1}}-\left(d_{1}-d_{2}\right)\right)}\right) \alpha^{n} 10^{-m_{2}-m_{3}}\right|<\frac{1.11}{10^{m_{2}}} .
	\end{equation}
	Let us introduce the following parameters:
	$$
	\biggr(\gamma_{1}, \gamma_{2}, \gamma_{3}\biggr)=\biggr(\frac{9}{2\left(d_{1} 10^{m_{1}}-\left(d_{1}-d_{2}\right)\right)}, \alpha, 10\biggr)
	$$
	and $$(b_{1},b_{2},b_{3})=(1, n, -m_{2}-m_{3}).$$ With these definitions established, we can proceed to apply Theorem \ref{thm1} . Here, $d_{\mathbb L}=2$ as the values $\gamma_{1}, \gamma_{2}$, and $\gamma_{3}$ are positive real numbers and elements of the field $\mathbb{K}=\mathbb{Q}[\sqrt{2}]$. We define
	$$
	\Gamma_4:=1-\left(\frac{9}{2\left(d_{1} 10^{m_{1}}-\left(d_{1}-d_{2}\right)\right)}\right) \alpha^{n} 10^{-m_{2}-m_{3}}.
	$$
	Using the same arguments applied earlier for $\Gamma_3$, we conclude that $\Gamma_4 \neq 0$. By employing the properties of the absolute logarithmic height, we obtain
	$$
	\begin{aligned}
		h\left(\gamma_{1}\right) & =h\left(\frac{9}{2\left(d_{1} 10^{m_{1}}-\left(d_{1}-d_{2}\right)\right)}\right)<2.9+ m_1 \log 10,\\
		h\left(\gamma_{2}\right) & =h(\alpha)=\frac{\log \alpha}{2}, \\
		h\left(\gamma_{3}\right) & =h(10)=\log 10<2.31 .
	\end{aligned}
	$$
	So, we can assign $A_{1}:=5.8+2 m_{1} \log 10, A_{2}:=0.9$, and $A_{3}:=4.62$. As $m_{2}+m_{3}\leq n$ and $D \geq \max \left\{|1|,|-n|,\left|-m_{2}-m_{3}\right|\right\}$, we can take $D:=n$. Considering the inequality \eqref{23} and applying Theorem \ref{thm1}, we obtain
	$$
	1.11 \cdot 10^{-m_{2}}>\left|\Gamma_4\right|>\exp \left(-C \cdot(1+\log n)\left(5.8+2 m_{1} \log 10\right)\right) .
	$$
	From this, we derive
	\begin{equation}\label{24}
		m_{2} \log 10<4 \cdot 10^{12} \cdot(1+\log n)\left(5.8+2 m_{1} \log 10\right)+\log 1.11 .
	\end{equation}
\subsection{Case 3}	
	
	Carrying out the third rearrangement of equation \eqref{16}, we have
	\begin{equation}\label{25}
		\frac{9 \alpha^{n}}{2}-\left(d_{1} 10^{m_{1}+m_{2}}-\left(d_{1}-d_{2}\right) 10^{m_{2}}-\left(d_{2}-d_{3}\right)\right) 10^{m_{3}}=-\biggr(\frac{9 \beta^{n}}{2}+d_{3}\biggr) .
	\end{equation}
	By taking the absolute values of both sides of equation \eqref{25}, we arrive at
	$$
	\left|\frac{9 \alpha^{n}}{2}-\left(d_{1} 10^{m_{1}+m_{2}}-\left(d_{1}-d_{2}\right) 10^{m_{2}}-\left(d_{2}-d_{3}\right)\right) 10^{m_{3}}\right| \leq \frac{9 \beta^{n}}{2}+d_{3}=\frac{9 \alpha^{-n}}{2}+9<9.1.
	$$
	This leads us to the pivotal inequality
	\begin{equation}\label{26}
		\left|({9 \alpha^{n}}/{2})-\left(d_{1} 10^{m_{1}+m_{2}}-\left(d_{1}-d_{2}\right) 10^{m_{2}}-\left(d_{2}-d_{3}\right)\right) 10^{m_{3}}\right|<9.1.
	\end{equation}
	Upon dividing both sides of \eqref{26} by $9 \alpha^{n} /2$, we obtain
	\begin{equation}\label{27}
		\left|1-\left(\frac{2\left(d_{1} 10^{m_{1}+m_{2}}-\left(d_{1}-d_{2}\right) 10^{m_{2}}-\left(d_{2}-d_{3}\right)\right)}{9}\right) \alpha^{-n} 10^{m_{3}}\right| \leq 2.1\cdot \alpha^{-n}. 
	\end{equation}
	Let us introduce the following parameters:
	$$
	\gamma_{1}:=\left(\frac{2\left(d_{1} 10^{m_{1}+m_{2}}-\left(d_{1}-d_{2}\right) 10^{m_{2}}-\left(d_{2}-d_{3}\right)\right)}{9}\right), \quad \gamma_{2}:=\alpha, \quad \gamma_{3}:=10 .
	$$
	Additionally, we set $b_{1}:=1, b_{2}:=-n, b_{3}:=m_{3}$. This configuration allows us to apply Theorem \ref{thm1}. The parameters $\gamma_{1}, \gamma_{2}$, and $\gamma_{3}$ are all positive real numbers that lie in the field $\mathbb{K}=\mathbb{Q}[\sqrt{2}]$ implying that $d_{\mathbb L}=2$. Let
	$$
	\Gamma_5:=1-\left(\frac{2\left(d_{1} 10^{m_{1}+m_{2}}-\left(d_{1}-d_{2}\right) 10^{m_{2}}-\left(d_{2}-d_{3}\right)\right)}{9}\right) \alpha^{-n} 10^{m_{3}}.
	$$
	We can confirm that $\Gamma_5 \neq 0$, just as for $\Gamma_3$. By leveraging the properties of the absolute logarithmic height, we obtain
	$$
	\begin{aligned}
		h\left(\gamma_{1}\right) & =h\left(\frac{2\left(d_{1} 10^{m_{1}+m_{2}}-\left(d_{1}-d_{2}\right) 10^{m_{2}}-\left(d_{2}-d_{3}\right)\right)}{9}\right) \\
		&\leq\log 20+\left(m_{1}+m_{2}\right) \log 10\\
        &<6+\left(m_{1}+m_{2}\right) \log 10\\
		h\left(\gamma_{2}\right) & =h(\alpha)=\frac{\log \alpha}{2} \\
		h\left(\gamma_{3}\right) & =h(10)=\log 10<2.31 .
	\end{aligned}
	$$
	So, we assign $A_{1}:=12+2 (m_{1}+m_2) \log 10+2m_{2} \log 10, A_{2}:=0.9$, and $A_{3}:=4.62$. As $m_{3}<n-1$ and $D \geq \max \left\{|1|,|-n|,\left|m_{3}\right|\right\}$, we can take $D:=n$. Consequently, by taking the inequality \eqref{27} into account and invoking Theorem \ref{thm1}, we derive
	$$
	2.1 \cdot \alpha^{-n}>\left|\Gamma_5\right|>\exp \left(-C \cdot(1+\log n)\left(12+2 m_{1} \log 10+4 m_{2} \log 10\right)\right)
	$$
	or
	\begin{equation}\label{28}
		n \log \alpha-\log (2.1)<4 \cdot 10^{12} \cdot(1+\log n)\left(12+2 m_{1} \log 10+4 m_{2} \log 10\right). 
	\end{equation}
	Leveraging the inequalities \eqref{20}, \eqref{24}, and \eqref{28}, a computation with $Mathematica$ yields the result that $n<3.8 \cdot 10^{41}$. 
\subsection{Reducing the upper bound on $n$}	
	Now, let us try to tighten the upper bound on $n$ by applying Lemma \ref{lem2}. We define
	$$
	z_{1}:=\log \left(\Gamma_3+1\right)=\left(m_{1}+m_{2}+m_{3}\right) \log 10-n \log \alpha-\log (9 /(2 d_{1}))).
	$$
	From \eqref{19}, we can deduce that
	$$
	\left|\Gamma_3\right|=\left|e^{-z_{1}}-1\right|<\frac{9.991}{10^{m_{1}}}<0.9995 \text { for } m_{1} \geq 1.
	$$
   By selecting $a:=0.9995$, we derive the inequality
	$$
	\left|z_{1}\right|=\left|\log \left(\Gamma_3+1\right)\right|<\frac{\log 2000}{0.9995} \cdot \frac{9.991}{10^{m_{1}}}<\frac{75.98}{10^{m_{1}}}
	$$
	according to Lemma \ref{lem3}. Thus, it follows that
	$$
	0<\left|\left(m_{1}+m_{2}+m_{3}\right) \log 10-n \log \alpha-\log \left(\frac{9}{2 d_{1}}\right)\right|<\frac{75.98}{10^{m_{1}}} .
	$$
	Dividing this inequality by $\log \alpha$, we get
	\begin{equation}\label{29}
		0<\left|\left(m_{1}+m_{2}+m_{3}\right) \frac{\log 10}{\log \alpha}-n-\frac{\log \left(9 /\left(2 d_{1}\right)\right)}{\log \alpha}\right|<(86.3) \cdot 10^{-m_{1}} .
	\end{equation}
	We can now invoke Lemma \ref{lem2}. Let us define
	$$
	\gamma:=\frac{\log 10}{\log \alpha} , \quad \mu:=-\frac{\log \left(9 /\left(2 d_{1}\right)\right)}{\log \alpha}, \quad A:=86.3, \quad B:=10, \quad \text { and } \quad w:=m_{1} .
	$$
	Let $M:=4 \cdot 10^{41}$. Then $M>m_{1}+m_{2}+m_{3}$, ensuring that the denominator of the $92$-th convergent of $\gamma$ is 13207611809473972496604686216431585910013086 exceeds $6 M$. We get the least positive
	$
	\epsilon:=\left\|\mu q_{92}\right\|-M\left\|\gamma q_{92}\right\|=0.062943
	$ for $d_1=6.$
	Consequently, the inequality \eqref{29} admits no solutions for
	$$
	m_{1} \geq 46.26>{\log \left(A q_{92} / \epsilon\right)}/{\log B}.
	$$
	Thus, we conclude that $m_{1} \leq 46$. Utilizing inequalities \eqref{24} and \eqref{28} together, and substituting this upper bound for $m_{1}$ into \eqref{28}, we obtain $n<4.4 \cdot 10^{29}$. Now, let us introduce
	$$
	z_{2}:=\log \left(\Gamma_4+1\right)=\left(m_{2}+m_{3}\right) \log 10-n \log \alpha-\log \left(\frac{9}{2\left(d_{1} 10^{m_{1}}-\left(d_{1}-d_{2}\right)\right)}\right).
	$$
	From \eqref{23}, we can assert that
	$
	\left|\Gamma_4\right|=\left|e^{-z_{2}}-1\right|<(1.11) \cdot 10^{-m_{2}}<0.2,
	$
	for $m_{2} \geq 1$. Choosing $a:=0.2$, we derive the inequality
	$$
	\left|z_{2}\right|=\left|\log \left(\Gamma_4+1\right)\right|<\frac{\log (5 / 4)}{0.2} \cdot \frac{1.11}{10^{m_{2}}}<\frac{1.24}{10^{m_{2}}}
	$$
	as confirmed by Lemma \ref{lem3}. This leads us to the conclusion that
	$$
	0<\left|\left(m_{2}+m_{3}\right) \log 10-n \log \alpha-\log \left(\frac{9}{2\left(d_{1} 10^{m_{1}}-\left(d_{1}-d_{2}\right)\right)}\right)\right|<1.24 \cdot 10^{-m_{2}} .
	$$
	Dividing both sides of the preceding inequality by $\log \alpha$, we get
	\begin{equation}\label{30}
		0<\left|\frac{\left(m_{2}+m_{3}\right) \log 10}{\log \alpha}-n-\frac{\log \left(9 /2\left(d_{1} 10^{m_{1}}-\left(d_{1}-d_{2}\right)\right)\right)}{\log \alpha}\right|<1.5 \cdot 10^{-m_{2}}. 
	\end{equation}
	Putting $\gamma:={\log 10}/{\log \alpha}$ and taking $m_{2}+m_{3}<M:=4.4\cdot 10^{29}$, we found that $q_{68}=27232938992914655197439992935676$, the denominator of the 68-th convergent of $\gamma$ exceeds $6M$. Next, we define
	$$
	\mu:=-\frac{\log \left(9 /(2\left(d_{1} 10^{m_{1}}-\left(d_{1}-d_{2}\right))\right)\right)}{\log \alpha} .
	$$
	Considering the constraints $m_{1} \leq 46, d_{1} \neq d_{2}, 1 \leq d_{1} \leq 9$ and $0 \leq d_{2} \leq 9$, a quick computation with $Mathematica$ yields the least positive
	$
	\epsilon=\epsilon(\mu):=\left\|\mu q_{68}\right\|-M\left\|\gamma q_{68}\right\|=0.0001994
	$
    for $(d_1, d_2, m_1)=(1,4,9).$
	Let $A:=1.5, B:=10$, and $w:=m_{2}$ in Lemma \ref{lem2}. Using $Mathematica$, we conclude that the inequality \eqref{30} has no solutions for
	$$
	m_{2} \geq 35.32>{\log \left(A q_{68} / \epsilon\right)}/{\log B}.
	$$
	Consequently, we have $m_{2} \leq 35$. Substituting the upper bounds obtained for $m_{1}$ and $m_{2}$ $(m_1\leq 46, m_2\leq35)$ into \eqref{28}, we get $n<1.3 \cdot 10^{16}$. Now, let
	$$
	z_{3}:=m_{3} \log 10-n \log \alpha+\log ({2\left(d_{1} 10^{m_{1}+m_{2}}-\left(d_{1}-d_{2}\right) 10^{m_{2}}-\left(d_{2}-d_{3}\right)\right)}/{9}).
	$$
	From \eqref{27}, we can express $\Gamma_5$ as
	$$
	\left|\Gamma_5\right|=\left|e^{z_{3}}-1\right|<1.02 \cdot \alpha^{-n}<0.01,
	$$
	valid for $n \geq 100$. Choosing $a:=0.01$, it follows that
	$$
	\left|z_{3}\right|=\left|\log \left(\Gamma_5+1\right)\right|<\frac{\log (100 / 99)}{0.01} \cdot \frac{1.02}{\alpha^{n}}<\frac{1.03}{\alpha^{n}}
	$$
	as per Lemma \ref{lem3}. This leads us to the conclusion:
	$$
	0<\left|m_{3} \log 10-n \log \alpha+\log ({2\left(d_{1} 10^{m_{1}+m_{2}}-\left(d_{1}-d_{2}\right) 10^{m_{2}}-\left(d_{2}-d_{3}\right)\right)}/{9})\right|<1.03 \cdot \alpha^{-n}.
	$$
	Dividing both sides of the above inequality by $\log \alpha$, we derive
	\begin{equation}\label{31}
		0<\left|m_{3} \frac{\log 10}{\log \alpha}-n+\frac{\log \left(2\left(d_{1} 10^{m_{1}+m_{2}}-\left(d_{1}-d_{2}\right) 10^{m_{2}}-\left(d_{2}-d_{3}\right)\right) / 9\right)}{\log \alpha}\right|<1.17 \cdot \alpha^{-n} .
	\end{equation}
	Let $\gamma:=\log 10/\log \alpha$ and consider $m_{3}<M:=1.3 \cdot 10^{16}$. We have found that $q_{43}=920197043232024959$, and notably, the denominator of the 43-th convergent of $\gamma$ exceeds $6M$. Defining
	$$
	\mu:=\frac{\log \left(2\left(d_{1} 10^{m_{1}+m_{2}}-\left(d_{1}-d_{2}\right) 10^{m_{2}}-\left(d_{2}-d_{3}\right)\right) / 9\right)}{\log \alpha}
	$$
	and taking into account the constraints $m_{1} \leq 46, m_{2} \leq 35,1 \leq d_{1} \leq 9$ and $0 \leq d_{2}, d_{3} \leq 9$, except in the scenarios where $d_{1}=d_{2} \neq d_{3},$ and $d_{1} \neq d_{2}=d_{3}$, a swift computation with $Mathematica$ reveals the least positive
	$$
	\epsilon=\epsilon(\mu):=\left\|\mu q_{43}\right\|-M\left\|\gamma q_{43}\right\|=0.00000123
	$$
	for $(d_1,d_2, d_3,m_1, m_2)=(3, 4, 0, 14, 5).$ Let us set $A:=1.17, B:=\alpha$, and $w:=n$ in Lemma \ref{lem2}. With the aid of $Mathematica$, we can confidently assert that the inequality \eqref{31} has no solution for
	$$
	n \geq 62.54>{\log \left(A q_{43} / \epsilon\right)}/{\log B}.
	$$
	Thus, we arrive at the conclusion $n \leq 62$, which stands in direct contradiction to our assumption that $n > 100$. This completes the proof.
\end{proof} 
\section{associated Pell numbers as the difference of two repdigits}
\begin{theorem}\label{thm5} The only associated Pell numbers that can be expressed as the difference of two repdigits are  1, 3, 7, 17, and 41, i.e.\\
	$q_0=q_1 = 1 = 9-8 = 8-7=7-6=6-5=5-4=4-3=3-2=2-1$, \\$q_2 = 3 = 11-8$,\\ $q_3 = 7 = 11-4$,\\$q_4 = 17 = 22-5$, \\and $q_5 = 41 = 44-3$.
	\begin{proof} Assume that \eqref{3} holds. Let $1 \leq n \leq 100$ and $n\geq 2$. Utilizing $Mathematica$, we find only the solutions detailed in Theorem \ref{thm5}. So from this point forward, we assume that $n > 100$. 
 If $k=l$, then it follows that $d_1 > d_2$, implying that $q_k$ is a repdigit. However, the largest possible repdigit in $q_n$ is 99 \cite{rp2018}. Thus, we get a contradiction since $n> 100$. Next, the scenario where $k-l=1$. If $d_1\geq d_2$, we encounter associated Pell numbers that are concatenation of two repdigits, which is impossible according to Lemma \ref{lem5}. If $d_1< d_2$, then we derive associated Pell numbers that are concatenation of three repdigits, contradicting Theorem \ref{thm4}. 
 
 Thus, we are left with the conclusion that $k-l \geq 2$. From the inequality
		\begin{equation*}
			\dfrac{\alpha^{2k}}{20}<\dfrac{10^{k-1}}{2}<10^{k-1}-10^{l-1}<\dfrac{d_1(10^k-1)}{9}-\dfrac{d_2(10^l-1)}{9}=q_n<\alpha^{n+1},
            \end{equation*}
       we obtain $\alpha^{2k}/20<\alpha^{n+1}$. Taking logarithms of both sides then yields the bound $k<n+5.$
        	
  We will now rewrite \eqref{3} into two distinct cases by using Binet's formula of associated Pell numbers, as presented below.
  \subsection{Case 1} Continuing with the first rearrangement of equation \eqref{3}, we obtain
		\begin{equation}\label{32}
			\dfrac{9\alpha^n}{2}- d_1 10^k =  -\biggr(\dfrac{9\beta^n}{2}+ d_2 10^l +(d_1-d_2)\biggr).
		\end{equation}
		Taking the absolute value of both sides of \eqref{32}, we obtain
		\begin{equation}\label{33}
			\biggr|\dfrac{9\alpha^n}{2}-d_1 10^k\biggr| \leq \dfrac{9|\beta|^n}{2}+d_2 10^l + |d_1-d_2|.
		\end{equation}
		Dividing both sides of \eqref{33} by $d_1 10^k$, we obtain
		\begin{equation*}
			\begin{split}
				\biggr|\dfrac{9\cdot10^{-k}\cdot\alpha^n}{2d_1}-1\biggr| & \leq \dfrac{9|\beta|^n}{2d_1 10^k} + \dfrac{d_2 10^l}{d_1 10^k}+\dfrac{|d_1-d_2|}{d_1 10^k} \\ 
				& \leq \dfrac{9|\beta|^n}{2\cdot10^{k-l+1}} + \dfrac{9}{10^{k-l}}+\dfrac{8}{10^{k-l+1}} .
			\end{split}
		\end{equation*}
		From this, we conclude that
		\begin{equation} \label{34}
			\biggr|\dfrac{9\cdot10^{-k}\cdot\alpha^n}{2d_1}-1\biggr| < \dfrac{9.81}{10^{k-l}}.
		\end{equation}
		Next, we apply Theorem \ref{thm1} with $(\gamma_1, \gamma_2, \gamma_3)$ = $(\alpha, 10, 9/2d_1)$ and $(b_1, b_2, b_3)$ = $(n, -k, 1)$. Notably, $\gamma_1$, $\gamma_2$, and $\gamma_3$ are positive real numbers and elements of the field $\mathbb K = \mathbb Q(\sqrt{2})$. Consequently, the degree of the field
		$\mathbb K$ is equal to $d_{\mathbb L}= 2$. Let
		\begin{equation*}
			\Gamma_6 = \dfrac{9\cdot10^{-k}\cdot\alpha^n}{2d_1}-1 .
		\end{equation*}
		We can ensure that $\Gamma_6 \neq 0$ as per earlier arguments.
		Leveraging the properties of absolute logarithmic height, we can analyze the heights as \begin{center}
		    $h(\gamma_1)=\dfrac{\log\alpha}{2},$ $h(\gamma_2)= \log 10,$ and $h(\gamma_3)\leq h(2d_1)+h(9) < 5.09 . $\end{center}
		We can set $A_1=\log \alpha$, $A_2= 2\log 10$, $A_3= 10.18$. Since $k< n+5$ and $D$ $\geq$ max$\{n, k, 1\}$, we can conveniently choose $D= n+5$. 
		Considering equation \eqref{34} and implementing Theorem \ref{thm1}, we obtain
		\begin{equation*}
			\log\biggr(\dfrac{9.81}{10^{k-l}}\biggr)>\log|\Gamma_6| > -1.4\cdot30^6\cdot3^{4.5}\cdot2^2 (1+\log 2)(1+\log (n+5))(\log\alpha)(2\log 10)(10.18).
		\end{equation*}
		A straightforward calculation reveals that this inequality leads to 
		\begin{equation}\label{35}
			\begin{split}
				(k-l)\log10 &< \log (9.81) + 4.1 \cdot 10^{13} (1+\log (n+5))\\ & < 4.2 \cdot 10^{13} (1+\log (n+5)).
			\end{split}
		\end{equation}
  \subsection{Case 2} Advancing to the second rearrangement of \eqref{3} as
		\begin{equation} \label{36}
			\dfrac{\alpha^n}{2}- \dfrac{d_1 10^k - d_2 10^l}{9} = -\dfrac{\beta^n}{2}- \dfrac{(d_1 - d_2)}{9}
		\end{equation}
		and taking the absolute value of both sides of \eqref{36}, we arrive at
		\begin{equation}\label{37}
			\biggr|\dfrac{\alpha^n}{2}- \dfrac{d_1 10^k - d_2 10^l}{9}\biggr| \leq \dfrac{|\beta^n|}{2}+ \dfrac{|d_1 - d_2|}{9}.
		\end{equation}
		Dividing both sides of the above inequality by $\alpha^n/2$, we find
		\begin{equation}\label{38}
			\biggr|1- \dfrac{2(d_1-d_210^{l-k})\cdot10^k\cdot\alpha^{-n}}{9}\biggr| \leq \frac{1}{\alpha^{2n}}+\dfrac{16}{9\alpha^n}< \frac{3}{\alpha^n}.
		\end{equation}
		We can now apply Theorem \ref{thm1} to the above inequality with
		\begin{center}
			$(\gamma_1, \gamma_2, \gamma_3)$ = $\biggr(\alpha, 10, \dfrac{2(d_1-d_210^{l-k})}{9}\biggr)$ and $(b_1, b_2, b_3)$ = $(-n, k, 1)$.
		\end{center}
		Crucially, $\gamma_1$, $\gamma_2$, and $\gamma_3$ are positive real numbers that lie within the field $\mathbb K = \mathbb Q(\sqrt{2})$. Thus, the degree of the field $\mathbb K$ is $d_{\mathbb L}= 2$.
		Let
		\begin{equation*}
			\Gamma_7 = 1- \dfrac{2(d_1-d_210^{l-k})\cdot10^k\cdot\alpha^{-n}}{9}.
		\end{equation*}
		If $\Gamma_7=0$, then $$1= \dfrac{2(d_1-d_210^{l-k})\cdot10^k\cdot\alpha^{-n}}{9}.$$ This leads to $(d_1-d_210^{l-k})\cdot10^k\cdot\alpha^{-n}=9/2$, implying $\alpha^{n}\in \mathbb Q$, which is a contradiction for $n>0$. 
  Using properties of absolute logarithmic height, we obtain
		\begin{equation*}
			h(\gamma_1)=h(\alpha)={(\log\alpha)}/{2},\hspace{0.2cm}
			h(\gamma_2)= \log 10. 
		\end{equation*}
		Next, we will estimate $h(\gamma_3)= h({2(d_1-d_210^{l-k})}/{9})$.
  Applying the properties of absolute logarithmic heights, we obtain
		\begin{equation*}
			\begin{split}
				h(\gamma_3) &\leq h(d_1/9)+h(d_2/9)+(k-l)\log10 +\log 2+\log 2 \\ & \leq 6.48+(k-l) \log 10.
			\end{split}
		\end{equation*}
		With these heights established, we can define $$A_1=\log \alpha, A_2= 2\log 10, A_3= 12.96+2(k-l)\log 10.$$ Since $k< n+5$ and $D$ $\geq \max\{n, k, 1\}$, we can take $D= n+5$. 
		Thus, considering \eqref{38} and applying Theorem \ref{thm1}, we derive
		\begin{equation*}
			3\cdot\alpha^n>|\Gamma_7|> e^{(C\cdot(1+\log 2)(1+\log(n+5))\cdot\log \alpha\cdot2\log 10\cdot(15.96+2(k-l)\log 10))},
		\end{equation*}
		where $C= -1.4\cdot30^6\cdot3^{4.5}\cdot 2^2$. By a simple computation, it follows that 
		\begin{equation}\label{39}
			n\log\alpha-\log 3 <4\cdot10^{12}\cdot(1+\log(n+5))(12.96+2(k-l)\log 10).
		\end{equation}
		Utilizing \eqref{35} and \eqref{39}, a computational search with $Mathematica$ gives us $n< 1.4\cdot10^{28}$.
	\subsection{Reducing the upper bound on $k$}	
		Let us reduce the upper bound on $k$ by using the Baker–Davenport algorithm as described in Lemma \ref{lem1}. We define
		\begin{equation*}
			z_3=n\log\alpha - k\log10 +\log({9}/{2d_1}).
		\end{equation*}
		\vspace{0.1cm}From \eqref{34}, we have
		\begin{equation*}
			|x| = |e^{z_3}-1|< \dfrac{9.81}{10^{k-l}}<\frac{1}{10}
		\end{equation*}
		for $k-l \geq 2$. Choosing $a = 0.1$, we arrive at the inequality
		\begin{equation*}
			|z_3|=|\log(x+1)|< \dfrac{\log(10/9)}{1/10} \cdot \frac{9.81}{10^{k-l}}<10.34\cdot 10^{l-k}
		\end{equation*}
		by Lemma \ref{lem2}. Consequently, we deduce that
		\begin{equation*}
			0<\biggr|n\log\alpha - k\log10 +\log({9}/{2d_1})\biggr|< 10.34\cdot 10^{l-k}.
		\end{equation*}
		Dividing this inequality by $\log 10$, we obtain
		\begin{equation}\label{40}
			0<\biggr|n\biggr(\frac{\log\alpha}{\log 10}\biggr) - k +\dfrac{\log({9}/{2d_1})}{\log 10}\biggr|< 4.5 \cdot 10^{l-k}.
		\end{equation}
		We can select $\tau={\log\alpha}/{\log 10} \notin \mathbb Q$ and $M = 1.4\cdot 10^{28}$. Notably, we find that $q_{66} = 86117281818724112510090871404
$, the denominator of the 66-th convergent of $\tau$ exceeding $6M$.
		Next, we define $\mu=\dfrac{\log(9/2d_1)}{\log 10}$.
		In this case, considering the fact that $1 \leq d_1 \leq9$, a quick computation with $Mathematica$ reveals the least positive  $$\epsilon(\mu) := \left\lVert \mu q_{66}\right\rVert - M\left\lVert \tau q_{66}\right\rVert = 0.138816$$
		for $d_1=8$. Let $A=4.5$, $B= 10$, and $\omega=k-l$ in Lemma \ref{lem1}. Thus, employing $Mathematica$, we can say that \eqref{40} has no solution if 
		\begin{equation*}
			\dfrac{\log(Aq_{66}/\epsilon(\mu))}{\log B} < 30.44<k-l.
		\end{equation*}
		So $k-l \leq 30 $.
		Substituting this upper bound for $k-l$ in \eqref{39}, we derive the result $n < 3.9 \cdot 10^{15} $. Now, let
		\begin{equation*}
			z_4= k\log10-n\log\alpha +\log\biggr(\dfrac{2(d_1-d_210^{l-k})}{9}\biggr).
		\end{equation*}
		From \eqref{38}, we have 
		\begin{equation*}
			|x| = |e^{z_4}-1|< \dfrac{3}{\alpha^n}<\frac{1}{10}
		\end{equation*}
		for $k\geq 25$. Choosing $a = 0.1$, we get the inequality
		\begin{equation*}
			|\Lambda_2|=|\log(x+1)|< \dfrac{\log(10/9)}{1/10} \cdot \frac{3}{\alpha^n}<3.17\cdot \alpha^{-n}
		\end{equation*}
		by Lemma \ref{lem2}. Thus, we conclude
		\begin{equation*}
			0<\biggr|k\log10-n\log\alpha +\log\biggr(\dfrac{2(d_1-d_210^{l-k})}{9}\biggr)|< 3.17\cdot \alpha^{-n}.
		\end{equation*}
		Dividing both sides by $\log \alpha$, we obtain
		\begin{equation}\label{41}
			0<\biggr|k\biggr(\frac{\log 10}{\log \alpha}\biggr) - n +\dfrac{\log\biggr(\dfrac{2(d_1-d_210^{l-k})}{9}\biggr)}{\log \alpha}\biggr|< 3.6\cdot \alpha^{-n}.
		\end{equation}
		Putting $\tau =\log 10/\log \alpha$ and taking $M= 3.9\cdot10^{15}$, we found that $q_{41}=30910886367884945$, the denominator of the 40-th convergent of $\tau$ exceeds $6M$. Now set
		$$\mu = \dfrac{\log\biggr(\dfrac{2(d_1-d_210^{l-k})}{9}\biggr)}{\log \alpha}.$$
		In this scenario, noting that $1 \leq d_1, d_2 \leq 9$ and $2 \leq k-l \leq 30$, a quick computation yields the least positive $\epsilon := \left\lVert \mu q_{41}\right\rVert - M\left\lVert \tau q_{41}\right\rVert =0.0.000564$ for $(d_1, d_2, k-l)=(4,1,11).$ Let $A=3.6$, $B= \alpha$, and $\omega=n$ in Lemma \ref{lem1}. Therefore, utilizing $Mathematica$, we find that equation \eqref{41} has no solution if 
		\begin{equation*}
			\dfrac{\log(Aq_{40}/\epsilon)}{\log B} < 53.019< n.
		\end{equation*}
		This leads to the result $n \leq 53$, which contradicts our assumption that $n> 100$. Hence, the proof is complete.
	\end{proof}	
\end{theorem}
{\bf Data Availability Statements:} Data sharing is not applicable to this article as no datasets were generated or analyzed during the current study.

{\bf Funding:} The authors declare that no funds or grants were received during the preparation of this manuscript.

{\bf Declarations:}

{\bf Conflict of interest:} On behalf of all authors, the corresponding author states that there is no {conflict} of interest.

\end{document}